\documentclass[11pt]{article}

\usepackage{latexsym}
\usepackage{amsmath}
\usepackage{amsfonts}
\usepackage{amssymb}
\usepackage{amsthm}
\usepackage{psfrag}
\usepackage{epsfig}
\usepackage{color}

\oddsidemargin 0.5cm
\textwidth     16cm
\textheight    20cm

\newcommand{\rreg}{{r_{\rm reg}}}
\newcommand{\riso}{{r_{\rm iso}}}
\newcommand{\rcomp}{{r_{\rm c}}}
\newcommand{\rcompinv}{{r_{\rm c}^{-1}}}
\newcommand{\rcompinvt}{{r_{\rm c}^{-2}}}

\newcommand{\R}{\mathbb{R}}

\newcommand{\fR}{\mbox{\footnotesize\rm I\kern-.18em R}}
\newcommand{\sR}{\mbox{\small\rm I\kern-.18em R}}
\newcommand{\N}{\mbox{\rm I\kern-.18em N}}
\newcommand{\dist}{\mathop{\rm dist}\nolimits}

\newcommand{\<}{\langle}
\renewcommand{\>}{\rangle}
\newcommand{\jump}[1]{[#1]}
\newcommand{\curl}{\mathop{\rm curl}\nolimits}
\newcommand{\bcurl}{\mathop{\rm{\bf curl}}\nolimits}

\newcommand{\curlS}[1]{\mathop{\rm curl_{\rm{#1}}}\nolimits}

\newcommand{\bcurlS}[1]{\mathop{\rm{\bf curl}_{\rm{#1}}}\nolimits}

\newcommand{\curlT}{\curlS{\mathcal{T}}}
\newcommand{\bcurlT}{\bcurlS{\mathcal{T}}}

\newcommand{\G}{{\Gamma}}

\newcommand{\CT}{{\cal T}}

\newcommand{\bn}{{\bf n}}
\newcommand{\bt}{{\bf t}}
\newcommand{\bvarphi}{{\mbox{\boldmath $\varphi$}}}

\newcommand{\tH}{\tilde H}

\newcommand{\bV}[1]{\mbox{\boldmath $V$}\!\!_{#1}}

\newtheorem{theorem}{Theorem}

\newtheorem{remark}[theorem]{Remark}
\newtheorem{lemma}[theorem]{Lemma}
\newtheorem{corollary}[theorem]{Corollary}

\title{
A non-conforming domain decomposition approximation for the
Helmholtz screen problem with hypersingular operator
\thanks{Supported by CONICYT through FONDECYT project 1150056 and Anillo ACT1118 (ANANUM).}
}

\author{
Norbert Heuer
\thanks{
Facultad de Matem\'aticas, Pontificia Universidad Cat\'olica de Chile,
Avenida Vicu\~na Mackenna 4860, Macul, Santiago, Chile.
email: {\tt \{nheuer,gjsalmeron\}@mat.puc.cl}}
\and
Gredy Salmer\'on~$^\dagger$
}

\begin{document}
\date{}
\maketitle

\bigskip
\begin{abstract}
We present and analyze a non-conforming domain decomposition approximation
for a hypersingular operator governed by the Helmholtz equation in three dimensions.
This operator appears when considering the corresponding Neumann problem in
unbounded domains exterior to open surfaces. We consider small wave numbers
and low-order approximations with Nitsche coupling across interfaces.
Under appropriate assumptions on mapping properties of the weakly singular
and hypersingular operators with Helmholtz kernel, we prove that this method
converges almost quasi-optimally. Numerical experiments confirm our error estimate.

\bigskip
\noindent
{\em Key words}: Helmholtz problem, hypersingular operator, boundary element method,
                 domain decomposition, Nitsche method.

\noindent
{\em AMS Subject Classification}:
65N38,  	
65N55.  	
\end{abstract}

\section{Introduction}

In recent years we have started to develop non-conforming boundary elements, in the
sense that approximations to boundary integral equations with hypersingular
operators can be discontinuous. Approaches consider both element-wise discontinuous
methods \cite{HeuerS_09_CRB,HeuerM_13_DGB} and domain decomposition techniques,
mortar coupling in \cite{HealeyH_10_MBE}
and Nitsche coupling in \cite{ChoulyH_12_NDD}. However, all results are
restricted to the simple model problem of the Laplacian.

In this paper we extend the Nitsche domain decomposition method from
\cite{ChoulyH_12_NDD} to the hypersingular operator $W_k$ stemming from the
Helmholtz problem with small wave number $k$. Traditional variational analysis of
this operator is based on the theory of Fredholm operators since, for small wave numbers,
$W_k$ can be handled as a compact perturbation of the elliptic operator $W_0$
which corresponds to the Laplace case.
This approach is not applicable to our non-conforming discrete setting.
The energy space of $W_0$, e.g. defined on an open surface $\G$,
is a trace space of $H^1(\Omega)$ (with $\Omega:=\R^3\setminus\bar\G$) and thus
of order $1/2$. In such a space there is no well-defined trace operator.
On the other hand, the analysis of discontinuous approximations requires
the consideration of jumps and thus, traces. Because of this conflict,
numerical analysis of discontinuous approximations of hypersingular integral
equations has been carried out exclusively on the discrete level where traces
are defined as restrictions. In this way arguments from variational settings
can be avoided.
Now, standard numerical analysis of Fredholm operators is based on
compactness arguments which, by nature, are connected with non-discrete
variational settings, cf., e.g.,
\cite{Stephan_87_BIE,HolmMS_96_hpB,MaischakMS_97_AMB,EnglederS_08_SBE}
where the analysis of boundary elements is based on G\r{a}rding's inequality.
In this paper, we present an analysis of the Helmholtz case which reconciles
both seemingly conflicting approaches, the restriction to discrete spaces
and appropriate extension to consider Fredholm operators.
This latter extension is done by providing discrete variants of
a G\r{a}rding's inequality.
Nevertheless, our main result will be based on three assumptions on the weakly
singular and hypersingular operators whose verification goes beyond the scope
of this paper.

Our analysis also uses a compactness argument. Corresponding estimates generate
unknown constants which depend in most cases on the geometry and possibly other data;
in our case they depend on the order of Sobolev norms.
For this reason, final estimates are based on Sobolev regularities $s>1/2$. Limits of
$s$ tending to $1/2$ cannot be considered since the dependence of the constants
on $s$ is unknown. This is different in the Laplace case where estimates
involving natural norms of order $1/2$ can be established by limits. In this
way quasi-optimal error estimates with poly-logarithmic perturbations appear,
cf., e.g., \cite[Theorem~3.1]{ChoulyH_12_NDD}.
In the Helmholtz case considered here, estimates are less specific by
assuming that Sobolev orders in upper bounds are strictly larger than $1/2$.

Let us note a further complication of discontinuous boundary elements. Discontinuous
(DG) finite elements are usually analyzed considering specific DG-type norms,
comprising broken semi-norms and scaled jump terms. They are tuned to harmonize
with DG-bilinear forms and have also been considered in the boundary element settings
studied in \cite{HeuerS_09_CRB,ChoulyH_12_NDD,HeuerM_13_DGB}. However, in a boundary
integral operator approach one has to consider a post-processing step consisting in
evaluating the underlying representation, e.g.,
$U_h(x)=\mathrm{op}_\G(K,u_h)$ for $x\in\Omega$
($u_h$ denoting the boundary element approximation, and
$\mathrm{op}_\G$ the integral operator with kernel $K$ used for representing the
solution $U=\mathrm{op}_\G(K,u)$ to the original boundary value problem).
One establishes convergence orders for the point-wise evaluation
by applying duality estimates to the integral operator,
\begin{equation} \label{point}
   |U(x)-U_h(x)|=|\mathrm{op}_\G(K,u-u_h)|\le \|u-u_h\|_* \|K(\cdot-x)\|_{*'}.
\end{equation}
Here, $\|\cdot\|_*$ and $\|\cdot\|_{*'}$ denote, respectively, the norm considered to
bound the boundary element error $u-u_h$ and its dual norm, and one uses that
the kernel $K(y-x)$ is smooth for $x\not=y$ ($x\in\Omega$, $y\in\G$).
It is not straightforward to analyze such a duality estimate for a DG-norm.
In this paper we provide an error estimate for a standard Sobolev norm (it is
a broken $H^{1/2}$-norm) so that its dual norm is known and can be used to make
the error estimate \eqref{point} explicit by specifying both norms.

The remainder of this paper is organized as follows. In the next section
we briefly recall some Sobolev norms and present the model problem. We also
formulate two assumptions on which our subsequent analysis is based.
In Section~\ref{sec_discrete} we present the non-conforming domain decomposition
setting and formulate the main result (Theorem~\ref{thm_Cea}), a C\'ea-type estimate.
The following Corollary~\ref{cor_error} establishes the convergence order
of the method. A proof of Theorem~\ref{thm_Cea} is given at the end of
Section~\ref{sec_proofs}, after collecting a number of preliminary results,
including consistency of the discrete method (Lemma~\ref{la_consistent}),
boundedness of the sesquilinear form in broken Sobolev spaces of order $s>1/2$
(Lemma~\ref{la_cont}), discrete G\r{a}rding's inequalities
(Lemma~\ref{la_ell} and Corollary~\ref{cor_ell}), and a lower-order error estimate
based on the Aubin-Nitsche trick (Lemma~\ref{la_AN}). Some numerical experiments
that confirm our estimates are reported in Section~\ref{sec_num}.

Throughout the article, we will use the symbols "$\lesssim$" and "$\gtrsim$"
in the usual sense. In short $a_h(v) \lesssim b_h(v)$ when there exists a constant
$C > 0$ independent of $v$ and the mesh size $h$, such that
$a_h(v)\le C\,b_h(v)$. Also, $a_h(v)\simeq b_h(v)$ means that
$a_h(v)\lesssim b_h(v)$ and $a_h(v)\gtrsim b_h(v)$.

\section{Sobolev spaces and model problem} \label{sec_model}

For $\Omega\subset\R^n$ and $0<s<1$ we define
\[
   \|u\|^2_{H^s(\Omega)}:=\|u\|^2_{L^2(\Omega)} + |u|^2_{H^s(\Omega)}
\]
with semi-norm
\begin{equation} \label{Hs}
    |u|_{H^s(\Omega)} := 
    \Bigl(
    \int_\Omega \int_\Omega \frac{|u(x)-u(y)|^2}{|x-y|^{2s+n}} \,dx\,dy
    \Bigr)^{1/2}.
\end{equation}
For a Lipschitz domain $\Omega$ and $0<s<1$, the space
$\tilde H^s(\Omega)$ is defined as the completion of $C_0^\infty(\Omega)$
under the norm
\[
   \|u\|_{\tilde H^s(\Omega)}
   :=
   \Bigl(
   |u|^2_{H^{s}(\Omega)}
   +
   \int_\Omega \frac{|u(x)|^2}{\dist(x,\partial\Omega)^{2s}} \,dx
   \Bigr)^{1/2}.
\]
For $s\in (0,1/2)$, $\|\cdot\|_{\tilde H^s(\Omega)}$ and $\|\cdot\|_{H^s(\Omega)}$
are equivalent norms whereas for $s\in(1/2,1)$ there holds
$\tilde H^s(\Omega) = H_0^s(\Omega)$, the latter space being the completion
of $C_0^\infty(\Omega)$ with norm in $H^s(\Omega)$.
For $s>0$ the spaces $H^{-s}(\Omega)$ and $\tilde H^{-s}(\Omega)$ are the
dual spaces (with $L^2(\Omega)$ as pivot space)
of $\tilde H^s(\Omega)$ and $H^s(\Omega)$, respectively.
For more details on Sobolev spaces we refer to \cite{LionsMagenes, Grisvard_85_EPN}.

In the following, let $\G$ be a piecewise plane Lipschitz surface.
For simplicity we assume that $\G$ is open with polygonal boundary $\partial\G$.
Sobolev spaces on faces of $\G$ are defined as previously, identifying faces with
sub-domains of $\R^2$, i.e., $n=2$ in \eqref{Hs}. For a closed surface
$\tilde\G$ being the boundary of $\tilde\Omega$ and containing $\G$,
$H^s(\tilde\G)$ is the trace of $H^{s+1/2}(\tilde\Omega)$ ($s>1/2$) and
$\tH^s(\G)$ is the space of functions from $H^s(\tilde\G)$ with support on $\G$.
Dualities with spaces of negative order are defined as previously.
Furthermore, throughout the paper, we use the same notation for Sobolev spaces of
vector-valued functions, taking respective norms component-wise.

Our model problem is:
{\em For given wave number $k>0$ and sufficiently smooth function $f$ find
$u\in\tH^{1/2}(\G)$ such that}
\begin{equation} \label{IE}
   W_ku(x):=-\frac 1{4\pi}\frac {\partial}{\partial \bn_x}
              \int_\G u(y) \frac {\partial}{\partial \bn_y} \frac {e^{ik|x-y|}}{|x-y|}
              \,dS_y
   = f(x),\quad x\in\G.
\end{equation}
Here, $\bn$ is a normal unit vector on $\G$ pointing to one side.

\begin{remark}
The assumption that $\G$ is an open surface implies that \eqref{IE} has a unique solution
for any $k$ with $Im(k)\ge 0$, cf.~\cite{Stephan_87_BIE}. For closed surfaces,
different boundary integral equations like the Burton-Miller formulation are in order,
see~\cite{BurtonM_71_AIE}.
\end{remark}

\begin{remark} \label{rem2}
(i) In the case of the Laplacian, i.e., $k=0$, it is well known that the solution of \eqref{IE}
with appropriate (and sufficiently smooth) right-hand side $f$
(so that it relates to a Neumann Laplace problem)
satisfies $u\in\tH^{r}(\G)$ for any $r<1$, see \cite{vonPetersdorffS_90_RMB,Dauge_88_EBV}.
In the Helmholtz case ($k>0$) Stephan used the theory of pseudo-differential operators to show 
that on open surfaces with smooth boundary curve, and $f\in H^1(\G)$, $u$ has a square-root
edge singularity and that $u\in\tH^r(\G)$ for any $r<1$. We do not know of a specific
analysis on open or closed polyhedral surfaces.\\
(ii) A direct formulation of the Helmholtz problem in $\R^3\setminus\bar\G$ with Neumann
boundary condition satisfies the Sommerfeld radiation condition, i.e., it only considers
outgoing waves. The boundary integral equation with hypersingular operator $W_k$ for the
wave number $k$ reflects this behavior. Changing the sign of $k$ turns the problem into
the non-physical one of incoming waves. In our analysis we will need the adjoint operator of
$W_k$. It can be immediately seen that this is $W_{-k}$, when considering the
$L^2(\G)$-sesquilinear form. Therefore, mapping properties of $W_{-k}$ can be proved
analogously to the ones of $W_k$ by replacing the Sommerfeld radiation condition of
outgoing waves by the one representing incoming waves, cf., e.g.,
\cite{McLean_00_SES} and see also \cite[Remark 3.9.6]{SauterS_11_BEM}.
However, for the particular case of an open polyhedral surface the literature is scarce,
as most specific results concern the Laplacian.
\end{remark}

Considering the two previous remarks, we are making the following assumptions. 

\medskip
\noindent{\bf Assumption 1.}
\emph{There exists $\rreg\in (1/2,1)$ such that the solution $u$ of \eqref{IE}
satisfies $u\in\tH^{\rreg}(\G)$.}

\medskip
\noindent{\bf Assumption 2.}
\emph{There exists $\riso\in (1/2,1)$ such that, for $k>0$, the operator
$W_{-k}:\;\tilde H^\riso(\G)\to H^{\riso-1}(\G)$ is an isomorphism.}

\medskip
A variational formulation of (\ref{IE}) is:
{\em Find $u\in\tilde H^{1/2}(\G)$ such that}
\begin{equation} \label{weak_org}
   \<W_ku, v\>_\G
   = \<f, v\>_\G\quad\forall v\in \tH^{1/2}(\G).
\end{equation}
Here, $\<\cdot,\cdot\>_\G$ denotes the duality pairing 
between $H^{-1/2}(\G)$ and $\tH^{1/2}(\G)$. 
Throughout, this generic notation will be used for the $L^2$-inner product
and other dualities, and the domain is indicated by the index.

A standard boundary element method for the approximate solution of (\ref{weak_org})
is to select a piecewise polynomial subspace $\tilde H_h\subset \tilde H^{1/2}(\G)$
and to define an approximant $\tilde u_h\in\tilde H_h$ by
\[
\<W_k\tilde u_h, v\>_\G
   = \<f, v\>_\G\quad\forall v \in\tilde H_h.
\]

\section{Domain decomposition with Nitsche coupling} \label{sec_discrete}

In this section, we introduce the Nitsche-based boundary element method
for the approximate solution of problem \eqref{weak_org}, and present the
main result, Theorem~\ref{thm_Cea}.

\subsection{Preliminaries} \label{sec_prelim}

We consider a decomposition of $\G$,
\[
   \CT := \{\G_j;\; j=1,\ldots,J\},
\]
where we assume that elements of $\CT$ are plane polygonal surfaces.
Throughout the paper, we will use the notation $v_j$
for the restriction of a function $v$ to a sub-surface $\G_j$ (also called sub-domain).
The decomposition of $\G$ induces product Sobolev spaces of complex-valued functions, e.g.,
\[
   H^s (\CT) := \Pi_j H^s(\G_j)
\]
with corresponding broken semi-norm $|\cdot|_{H^s(\CT)}$, using on each sub-domain the
Sobolev-Slobodeckij semi-norm previously defined.
This notation with decomposition $\CT$ will be used generically, i.e.,
also for the piecewise $L^2$-sesquilinear form
\[
   \< v, w\>_\CT := \sum_j \<v_j, w_j\>_{\G_j}
\]
and its extension by duality to $\tilde H^s(\CT)\times H^{-s}(\CT)$.

We also make use of the surface differential operators $\bcurl$ and $\curl$.
On a subset of $\R^2\times\{0\}$ they amount to
\(
   \bcurl \varphi := \bigl(\partial_{x_2}\varphi, -\partial_{x_1}\varphi, 0\bigr)^T
\)
and
\(
   \curl \bvarphi
   := \partial_{x_1}\varphi_2 - \partial_{x_2}\varphi_1
\)
for sufficiently smooth scalar and vector functions $\varphi$ and
$\bvarphi=(\varphi_1,\varphi_2,\varphi_3)^T$, respectively.
For a definition and analysis on Lipschitz surfaces we refer to \cite{BuffaCS_02_THL}.
The restrictions of these operators to a face $\G_j$ will be denoted by $\bcurl_j$ and $\curl_j$.
Corresponding to the decomposition $\CT$ we also define the broken or piecewise operators
$\bcurlT$ and $\curlT$, e.g.,
\(
   (\bcurlT \varphi)|_{\G_j} := \bcurl_j \varphi_j
\)
($j=1,\ldots,J$) and similarly the other operator.

Let $\gamma$ denote the skeleton of $\CT$, including $\partial\G$.
The jump $\jump{\cdot}$ of functions across $\gamma$ is defined so that it is
compatible with a tangential direction on $\gamma$, appearing when integrating by parts the
surface differential operators. 
More precisely, for a scalar function $v$ (sufficiently $\CT$-piecewise smooth) and a
tangential vector field $\bvarphi$ (sufficiently smooth so that its trace on $\gamma$
is well defined) we define tangential components $\bt(\bvarphi)$ and jumps $\jump{v}$
being compatible with the integration-by-parts formula
\begin{equation} \label{def_t}
   \<\bt(\bvarphi), \jump{v}\>_\gamma
   =
   \<\curlS{\CT}\bvarphi, v\>_\CT
   - \<\bcurlT v, \bvarphi\>_\CT.
\end{equation}
We select a unique tangential direction on $\gamma\setminus\partial\G$
(this fixes the directions of the jumps), and on $\partial\G$ so that $\jump{v}|_{\partial\G}$
is the trace of $v$ on $\partial\G$.

Now, for $s\in [1/2,1]$ and $\nu>0$, we introduce the norm
\[
   \|v\|_{H^s_\nu(\CT)} := \Bigl(|v|_{H^s(\CT)}^2  + \nu \|\jump{v}\|_{L^2(\gamma)}^2\Bigr)^{1/2}.
\]
For $s>1/2$, this is a norm in $H^s(\CT)$ and in the case $s=1/2$, this norm will be
used only for discrete functions whose jumps across $\gamma$ are well defined as elements
of $L^2(\gamma)$.

We end this section with recalling a relation that connects the hypersingular operator $W_k$
with the single layer operator $V_k$ defined by
\[
   V_k\varphi(x) := \frac 1{4\pi} \int_\G \varphi(y) \frac {e^{ik|x-y|}}{|x-y|}\,dS_y,
   \quad \varphi\in \tilde H^{-1/2}(\G),\ x\in\G.
\]
When applied component-wise to vector-valued functions we use the bold face symbol $\bV{k}$.
The operators $W_k$ and $V_k$ satisfy the relation
\begin{equation} \label{WV}
 \<W_k u , v\>_\G = \<\bV{k}\bcurl u, \bcurl v \>_\G - k^2 \<\bV{k} \bn\, u, \bn\, v\>_\G
  \quad \forall u,v \in \tH^{1/2}(\G),
\end{equation}
see \cite{Maue_49_FAB,Nedelec_82_IEN}.
As in previous publications on the Laplacian,
this formula will give rise to our non-conforming discrete formulation of the hypersingular operator.

\subsection{Discrete method and main result}

On every sub-domain $\G_j$ we consider regular, quasi-uniform meshes $\CT_j$, $j=1,\ldots, J$,
of shape-regular elements (quadrilaterals or triangles), $\bar\G_j = \cup_{K\in\CT_j} \bar K$.
The maximum, respectively minimum, diameter of the elements of $\CT_j$ is denoted by
$h_j$, respectively $\underline{h}_j$. We also define
\[
   h := \max\{h_1,\ldots, h_J\},\qquad
   \underline{h} := \min\{\underline{h}_1,\ldots, \underline{h}_J\}.
\]
Throughout this paper we assume that $0<\underline{h}\le h\le C<\infty$.
Indeed, our main result assumes globally quasi-uniform meshes ($\underline{h}\simeq h$).
But since some technical results hold for more general meshes we use the notation
of $\underline{h}$.
We introduce discrete spaces on sub-domains consisting of piecewise (bi)linear functions:
\[
   X_{h,j} :=
   \{ v \in C^0(\G_j);\; v|_K\ \mbox{is a polynomial of degree one } \forall K \in \CT_j\},\
   j=1,\ldots, J.
\]
Our global approximation space then is
\[
	X_h := \Pi_j X_{h,j}.
\]
We identify both product spaces $H^s(\CT)$ and $X_h$ with their direct sums,
e.g., $X_h=X_{h,1}\oplus\cdots\oplus X_{h,J}$ so as to consider their elements as scalar functions.
Doing so, we note that $X_h\not\subset \tH^{1/2}(\G)$ due to the possible discontinuity and
non-vanishing trace on $\partial\G$ of its elements.
Using the discrete space $X_h$ for the approximation of \eqref{IE} requires a different sesquilinear
form that is well defined for such functions and that controls their jumps.

For given $\nu>0$ and $r\in\R$, we define the following sesquilinear form on $X_h \times X_h$:
\begin{align*} 
   A_r(v, w)
   &:= 
   \<\bV{r} \bcurlT v, \bcurlT w\>_{\CT}
   -
   r^2 \<\bV{r} \bn\, v, \bn\, w\>_\G
   \nonumber\\
   &\qquad+
   \<T_r(v), \jump{w}\>_\gamma
   +
   \<\jump{v}, T_{-r}(w)\>_\gamma
   +
   \nu \<\jump{v}, \jump{w}\>_\gamma
\end{align*}
with operator $T_r$ being given by (cf.~\eqref{def_t})
\begin{equation} \label{def_T}
   T_r(v) := \bt(\bV{r} \bcurlT v)|_\gamma,
   \quad
   v\in H^s(\CT),\ s>1/2,\ r\in\R.
\end{equation}

The Nitsche-based non-conforming domain decomposition method associated to problem \eqref{weak_org}
then reads as: {\em Find $u_h\in X_h$ such that}
\begin{equation} \label{bem_nitsche}
   A_k(u_h,v) = \<f, v\>_\G \quad\forall v\in X_h.
\end{equation}

The analysis of this scheme will be based on a third assumption which is quite natural
but whose proof we have not found in the literature for our precise situation.
It is well known that $V_k$ and $W_k$ are Fredholm operators of index zero. This follows
from the fact that they are, respectively, compact perturbations of the positive
definite operators $V_0$ and $W_0$ as mappings of their energy spaces to the dual spaces.
For a closed smooth surface this follows from the theory of pseudo-differential operators
and has been extended by Stephan \cite{Stephan_87_BIE} to open surfaces.
There, it is shown that
\begin{align} \label{compact}
   \exists \rcomp\in (0,1/2]:\quad
   V_k-V_0:\; \tH^{-1/2}(\G)\to H^{1/2+\rcomp}(\G)
\end{align}
for a smooth open surface. Our assumption is that this holds for our open, piecewise plane
Lipschitz surface.

\medskip
\noindent{\bf Assumption 3.} \emph{There holds \eqref{compact}.}

\medskip
The main result of this paper is:

\begin{theorem} \label{thm_Cea}
Let Assumptions~1,2,3 hold true and assume that the meshes
defining $X_h$ are globally quasi-uniform, i.e., $h\simeq\underline{h}$. Given $\epsilon>0$ choose
$\nu\simeq h^{-\epsilon}$. Then the discrete scheme \eqref{bem_nitsche} is uniquely solvable
for $h$ small enough. Furthermore, selecting $\epsilon'>\epsilon$ and $s\in (1/2,\rreg]$,
there exists $h_0>0$ such that there holds the almost quasi-optimal error estimate
\[
   \|u-u_h\|_{H^{1/2}(\CT)}
   \lesssim
   h^{1/2-s-\epsilon'}
   \inf_{v\in X_h} \|u-v\|_{H^s(\CT)}
   \quad\forall h\le h_0.
\]
Here, $u$ and $u_h$ are the solutions of \eqref{IE} and \eqref{bem_nitsche}, respectively.
\end{theorem}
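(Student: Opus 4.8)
The plan is to follow the classical Schatz-type argument for Gårding inequalities adapted to the non-conforming discrete setting, using the auxiliary results announced in the introduction. First I would invoke consistency (Lemma~\ref{la_consistent}) so that the Galerkin orthogonality $A_k(u-u_h,v)=0$ holds for all $v\in X_h$, noting that $u$ has to be understood in a broken sense on $\CT$ and that the extra jump and $T_{\pm k}$ terms in $A_k$ vanish on the exact solution because $\jump{u}=0$ across $\gamma$ (here $u\in\tH^{\rreg}(\G)$ by Assumption~1, with $\rreg>1/2$, so its trace on $\gamma$ is well defined and zero). Then boundedness of $A_k$ on $H^s(\CT)\times H^s(\CT)$ for $s>1/2$ (Lemma~\ref{la_cont}), combined with the discrete Gårding inequality (Lemma~\ref{la_ell} and Corollary~\ref{cor_ell}), gives, for the error $e:=u-u_h$ and any $v\in X_h$, an inequality of the shape
\[
   \|e\|_{H^{1/2}_\nu(\CT)}^2
   \lesssim
   \mathrm{Re}\,A_k(e,u_h-v) + (\text{compact perturbation of } e)
   \lesssim
   \|e\|_{H^s_\nu(\CT)}\,\|u-v\|_{H^s_\nu(\CT)} + \|e\|_{*}^2,
\]
where $\|\cdot\|_*$ is a weaker norm controlled by the Aubin-Nitsche estimate. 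The factor $h^{1/2-s-\epsilon'}$ will come from an inverse inequality converting the $H^s(\CT)$-norm of the discrete quantity $u_h-v$ (or of $e$ where it is discrete) into an $H^{1/2}(\CT)$-norm, losing a power $h^{1/2-s}$, together with the extra $h^{-\epsilon}$ factors carried by $\nu\simeq h^{-\epsilon}$ in the stabilization term, whose mismatch with $\epsilon'>\epsilon$ absorbs logarithmic or borderline losses.

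The second ingredient is handling the compactness/Fredholm obstruction. Since $W_k$ is only a Fredholm operator of index zero (a compact perturbation of the positive definite $W_0$, via Assumption~3 and the relation \eqref{WV}), the discrete Gårding inequality only controls $\|e\|_{H^{1/2}_\nu(\CT)}$ up to a term like $\|e\|_{\tH^{1/2-\rcomp}(\G)}$ or an even weaker negative-order norm. I would then apply the Aubin-Nitsche trick (Lemma~\ref{la_AN}): using that $W_{-k}$ is an isomorphism on $\tH^{\riso}(\G)$ (Assumption~2) one solves the adjoint problem with data dual to this weak norm, tests against the error, uses Galerkin orthogonality to subtract an approximation of the adjoint solution, and obtains $\|e\|_* \lesssim h^{\theta}\|e\|_{H^{1/2}(\CT)}$ for some positive $\theta>0$ depending on $\riso,\rcomp,s$. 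For $h$ small enough this lower-order term is absorbed into the left side, which simultaneously yields unique solvability of \eqref{bem_nitsche} (the homogeneous problem forces $e=0$ for $f=0$, and index zero then gives existence).

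Putting the pieces together: from $\|e\|_{H^{1/2}_\nu(\CT)}^2 \lesssim \|e\|_{H^s_\nu(\CT)}\|u-v\|_{H^s_\nu(\CT)} + \|e\|_*^2$, absorbing $\|e\|_*^2$, and bounding $\|e\|_{H^s_\nu(\CT)}\lesssim h^{1/2-s}\|e\|_{H^{1/2}_\nu(\CT)}$ on the discrete part (splitting $e = (u-v) + (v-u_h)$ and using the inverse estimate only on the discrete difference $v-u_h\in X_h$, while $\nu\|\jump{v-u_h}\|_{L^2(\gamma)}^2$ is controlled similarly with the $h^{-\epsilon}$ weight), one divides through by $\|e\|_{H^{1/2}_\nu(\CT)}$ and rearranges to reach
\[
   \|u-u_h\|_{H^{1/2}(\CT)}
   \le
   \|u-u_h\|_{H^{1/2}_\nu(\CT)}
   \lesssim
   h^{1/2-s-\epsilon'}\inf_{v\in X_h}\|u-v\|_{H^s(\CT)},
\]
where the passage from $\epsilon$ to $\epsilon'$ accommodates the $\nu$-weighted jump contributions and the compactness constants, which depend on Sobolev orders in an uncontrolled way and hence require $s>1/2$ strictly. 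I expect the main obstacle to be the Aubin-Nitsche step: making the duality argument work in the non-conforming setting requires that the $T_{\pm k}$ coupling terms and the jump terms behave well under the adjoint bilinear form, that the adjoint solution (governed by $W_{-k}$, whence Assumption~2) can be approximated in $X_h$ with the right rate even though it lives in $\tH^{\riso}(\G)$ with $\riso$ possibly close to $1/2$, and that the resulting exponent $\theta$ is genuinely positive so that the absorption is legitimate for small $h$; tracking the dependence of all constants on $s$ and on the gap $\epsilon'-\epsilon$ is the delicate bookkeeping.
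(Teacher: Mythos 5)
Your proposal follows essentially the same route as the paper: consistency (Lemma~\ref{la_consistent}) for Galerkin orthogonality, the Strang-type splitting with boundedness (Lemma~\ref{la_cont}) and the discrete G\r{a}rding inequality (Corollary~\ref{cor_ell}) applied to the discrete difference $u_h-v$, the Aubin--Nitsche duality estimate of Lemma~\ref{la_AN} (built on Assumption~2 for $W_{-k}$) to absorb the compact-perturbation term for small $h$, and inverse inequalities producing the $h^{1/2-s}$ factor with $\epsilon'>\epsilon$ absorbing the $\nu$-weights. The only substance you leave implicit is the bookkeeping the paper carries out explicitly --- the case distinction via $\alpha=\max\{0,\riso-\rcomp-1/2\}$, the choice $s<(1+\riso-\alpha-\epsilon-\epsilon')/3$ and $\delta=h^{2s-1+\epsilon+2\epsilon'+\epsilon''}$ --- but you correctly identify this as the delicate part, so the plan is sound.
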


A proof of this result will be given at the end of Section~\ref{sec_proofs}.
We also obtain the following a priori error estimate.

\begin{corollary} \label{cor_error}
Let Assumptions~1,2,3 hold true and assume that the meshes
defining $X_h$ are globally quasi-uniform.
Given $\epsilon'>\epsilon>0$ choose $\nu\simeq h^{-\epsilon}$.
Then there exists $h_0>0$ such that there holds
\[
   \|u-u_h\|_{H^{1/2}(\CT)}
   \lesssim
   h^{\rreg-1/2-\epsilon'} \|u\|_{H^{\rreg}(\G)}
   \quad\forall h\le h_0.
\]
\end{corollary}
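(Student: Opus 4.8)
The plan is to derive Corollary~\ref{cor_error} directly from Theorem~\ref{thm_Cea} by making a concrete, near-optimal choice of the approximant $v\in X_h$ and invoking a standard approximation property of the piecewise (bi)linear space. I would start from the almost quasi-optimal estimate
\[
   \|u-u_h\|_{H^{1/2}(\CT)}
   \lesssim
   h^{1/2-s-\epsilon'}\inf_{v\in X_h}\|u-v\|_{H^s(\CT)},
\]
valid for any $s\in(1/2,\rreg]$, and in particular take $s=\rreg$, which is permitted since Assumption~1 gives $u\in\tH^{\rreg}(\G)$ with $\rreg\in(1/2,1)$. This fixes the prefactor to $h^{1/2-\rreg-\epsilon'}$.

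Next I would bound the best-approximation error. Because $u\in\tH^{\rreg}(\G)\subset H^{\rreg}(\G_j)$ on each sub-domain and $u$ is globally continuous (hence conforming across $\gamma$), standard approximation theory for continuous piecewise linear functions on the quasi-uniform meshes $\CT_j$ gives, on each $\G_j$ and for $s=\rreg\le 1$,
\[
   \inf_{v_j\in X_{h,j}}\|u_j-v_j\|_{H^{\rreg}(\G_j)}
   \lesssim
   h_j^{\rreg-\rreg}\,\|u_j\|_{H^{\rreg}(\G_j)}
   \simeq
   \|u_j\|_{H^{\rreg}(\G_j)};
\]
more precisely, interpolating between the $L^2$-error estimate and the $H^1$-stability of the nodal interpolant, one obtains for the broken norm
\[
   \inf_{v\in X_h}\|u-v\|_{H^{\rreg}(\CT)}
   \lesssim
   \|u\|_{H^{\rreg}(\G)},
\]
with no negative power of $h$, since here the smoothness index of the norm coincides with the available regularity $\rreg$. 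Here the global quasi-uniformity $h\simeq\underline h$ lets me replace the individual $h_j$ by $h$ uniformly.

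Combining the two displays yields
\[
   \|u-u_h\|_{H^{1/2}(\CT)}
   \lesssim
   h^{1/2-\rreg-\epsilon'}\,\|u\|_{H^{\rreg}(\G)},
\]
which has the wrong sign in the exponent, so a naive substitution does \emph{not} reproduce the claimed rate $h^{\rreg-1/2-\epsilon'}$. The resolution—and the genuine content of the corollary—must be that one does not take $s=\rreg$ and $v=0$, but rather exploits the full strength of the approximation estimate at the scaling level. I would instead keep $s$ as a free parameter in $(1/2,\rreg]$ and use the \emph{scaled} approximation bound $\inf_{v\in X_h}\|u-v\|_{H^s(\CT)}\lesssim h^{\rreg-s}\|u\|_{H^{\rreg}(\G)}$, which is the correct estimate when approximating an $H^{\rreg}$-function in the weaker $H^s$-norm ($s\le\rreg$). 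Substituting this into Theorem~\ref{thm_Cea} gives
\[
   \|u-u_h\|_{H^{1/2}(\CT)}
   \lesssim
   h^{1/2-s-\epsilon'}\,h^{\rreg-s}\,\|u\|_{H^{\rreg}(\G)}
   =
   h^{\rreg-2s+1/2-\epsilon'}\,\|u\|_{H^{\rreg}(\G)}.
\]

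The final step is to optimize the exponent $\rreg-2s+1/2-\epsilon'$ over $s\in(1/2,\rreg]$. This is increasing as $s$ decreases, so I would push $s\downarrow 1/2$; taking $s=1/2+\delta$ for arbitrarily small $\delta>0$ gives exponent $\rreg-1/2-\epsilon'-2\delta$. Absorbing $2\delta$ into the gap between $\epsilon'$ and $\epsilon$ (the corollary only requires $\epsilon'>\epsilon$, so one has room to replace $\epsilon'$ by a slightly larger value while staying strictly above $\epsilon$), I arrive at
\[
   \|u-u_h\|_{H^{1/2}(\CT)}
   \lesssim
   h^{\rreg-1/2-\epsilon'}\,\|u\|_{H^{\rreg}(\G)}
   \quad\forall h\le h_0,
\]
as claimed. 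The main obstacle, and the step requiring genuine care rather than routine manipulation, is establishing the \emph{scaled} broken approximation estimate $\inf_{v\in X_h}\|u-v\|_{H^s(\CT)}\lesssim h^{\rreg-s}\|u\|_{H^{\rreg}(\G)}$ for the non-integer Sobolev--Slobodeckij norms with $1/2<s\le\rreg<1$: one must control the nodal interpolation error in fractional norms on each quasi-uniform mesh $\CT_j$, which is typically done by combining local scaling arguments with an interpolation-space (K-method) characterization of $H^s$ between $L^2$ and $H^1$, and then summing over sub-domains using global quasi-uniformity to obtain a single power of $h$.
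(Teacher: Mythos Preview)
Your proposal is correct and follows essentially the same route as the paper: apply Theorem~\ref{thm_Cea}, insert the standard approximation estimate $\inf_{v\in X_h}\|u-v\|_{H^s(\CT)}\lesssim h^{\rreg-s}\|u\|_{H^{\rreg}(\G)}$, choose $s=1/2+\epsilon''$ (your $\delta$), and absorb the resulting $2\epsilon''$ into a relabeled $\epsilon'$. The paper's proof is just the terse version of your second (correct) attempt; your initial detour with $s=\rreg$ is unnecessary but harmless.
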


\begin{proof}
We combine the error estimate by Theorem~\ref{thm_Cea} with standard approximation properties.
The assertion follows by selecting $s=1/2+\epsilon''$ with $\epsilon''>0$ and renaming
$2\epsilon''+\epsilon'$ as a new $\epsilon'$.
\end{proof}

\section{Technical details and proof of the main theorem} \label{sec_proofs}

We start with collecting some preliminary technical results in the following subsection.
Then, in Subsection~\ref{sub_consistent}, we prove essential ingredients of the
proof of Theorem~\ref{thm_Cea}, which is given at the end of this section.

\subsection{Preliminary results}

We will make use of the continuity (see \cite{Costabel_88_BIO}):
\begin{equation}
\label{cont_V}
   V_r:\; \tH^{s-1}(\G) \rightarrow H^{s}(\G), \quad 0 \le s\le 1,\ r\in\R.
\end{equation}
Proofs for the statements of the following lemma can be found in
\cite[Lemma 5]{Heuer_01_ApS} and \cite[Lemma 4.3]{GaticaHH_09_BLM}.

\begin{lemma}
Let $R\subset\R^2$ be a Lipschitz domain with boundary $\partial R$.\\
(i) There holds
\begin{align} \label{tech1_1}
   \|v\|_{\tH^s(R)} &\lesssim \frac{1}{1/2-|s|} \|v\|_{H^s(R)}
\end{align}
for any $s \in (-1/2,1/2)$ and any $v \in H^s(R)$.\\
(ii) There holds
\begin{align} \label{tech1_2}
   \|v\|_{L^2(\partial R)} &\lesssim \frac{1}{\sqrt{s-1/2}} \|v\|_{H^s(R)}
\end{align}
for any $s \in (1/2,1]$ and any $v \in H^s(R)$.
\end{lemma}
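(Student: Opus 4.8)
The plan is to prove both inequalities directly from the definitions of the norms, tracking the explicit dependence of the constants on the distance of $s$ from $1/2$. The key point in each case is that a standard Sobolev embedding or extension constant, which is usually stated as a fixed number, actually behaves like a specific power of $1/2-|s|$ (respectively $s-1/2$) as $s$ approaches the critical value $1/2$; I would make this dependence quantitative by carefully estimating the integrals that define the relevant norms rather than invoking a black-box embedding theorem. Since the statements are quoted from \cite{Heuer_01_ApS} and \cite{GaticaHH_09_BLM}, I would follow the structure of those references, but I will outline a self-contained argument.

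For part (i), I would start from the definition of $\|\cdot\|_{\tilde H^s(R)}$, namely $\|v\|_{\tilde H^s(R)}^2 = |v|_{H^s(R)}^2 + \int_R |v(x)|^2 \dist(x,\partial R)^{-2s}\,dx$. The semi-norm term is already controlled by $\|v\|_{H^s(R)}$, so the whole task reduces to bounding the weighted $L^2$-term by $(1/2-|s|)^{-2}\|v\|_{H^s(R)}^2$. The crucial tool is a Hardy-type inequality: for $s\in(0,1/2)$ one has $\int_R |v(x)|^2\dist(x,\partial R)^{-2s}\,dx \lesssim |v|_{H^s(R)}^2 + \|v\|_{L^2(R)}^2$, and the sharp Hardy constant degenerates like $(1/2-s)^{-1}$ (hence like $(1/2-|s|)^{-1}$ after squaring in the norm) as $s\uparrow 1/2$. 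For negative $s\in(-1/2,0)$ I would argue by duality, noting that $\tilde H^s(R)$ is the dual of $H^{-s}(R)$ with $-s\in(0,1/2)$, so the blow-up of the constant transfers from the positive-order estimate. I would keep the exponent of the blow-up factor consistent with the stated first power of $(1/2-|s|)^{-1}$, verifying that squaring and taking square roots reproduces exactly the claimed $(1/2-|s|)^{-1}$ prefactor.

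For part (ii), the object is the trace inequality $\|v\|_{L^2(\partial R)} \lesssim (s-1/2)^{-1/2}\|v\|_{H^s(R)}$ for $s\in(1/2,1]$. Here I would use a localization and flattening argument near $\partial R$, reducing to the model half-space trace estimate, and then estimate the trace of the fractional Sobolev norm explicitly. Concretely, for the half-space one has $\|v(\cdot,0)\|_{L^2(\R^{n-1})}^2 \lesssim \int \|\widehat v(\xi',x_n)\|^2$-type bounds whose constant involves $\int_0^\infty (1+|\xi'|^2+t^2)^{-s}\,dt \simeq |\xi'|^{1-2s}/(2s-1)$; the factor $(2s-1)^{-1}$ is exactly the source of the $(s-1/2)^{-1/2}$ blow-up after taking the square root. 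I would then reassemble the global estimate via a partition of unity subordinate to the Lipschitz boundary chart, checking that the partition constants do not reintroduce any $s$-dependence. The main obstacle in both parts is precisely this bookkeeping: one must ensure that every step, including the flattening of the Lipschitz boundary, interpolation, and the partition of unity, contributes only $s$-independent constants, so that the sole surviving singular factor is the sharp one coming from the model integral. Getting the exponent of the blow-up correct (first power in (i), half-power in (ii)) is the delicate quantitative point, and it is where I would concentrate the careful estimates.
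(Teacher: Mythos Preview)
The paper does not actually prove this lemma; it simply cites \cite[Lemma~5]{Heuer_01_ApS} for part~(i) and \cite[Lemma~4.3]{GaticaHH_09_BLM} for part~(ii). Your outline is therefore strictly more than what the paper provides, and it is in the spirit of what those references do: a Hardy-type control of the weighted $L^2$-term for~(i), duality for negative~$s$, and a quantitative trace estimate via flattening for~(ii).

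Two small remarks. First, your bookkeeping of the Hardy constant is slightly garbled: the model computation (take $v\equiv 1$ on a unit interval and evaluate $\int_0^1 x^{-2s}\,dx=(1-2s)^{-1}$) already shows that the constant in the \emph{squared} weighted inequality must be at least $\simeq(1/2-s)^{-1}$, so the norm-level constant is at least $\simeq(1/2-s)^{-1/2}$; this is in fact better than the $(1/2-|s|)^{-1}$ claimed in the lemma, so your Hardy route proves the statement with room to spare, but the sentence ``hence like $(1/2-|s|)^{-1}$ after squaring in the norm'' does not parse. Second, for the duality step at negative~$s$ you should be explicit that $\tilde H^s(R)=(H^{-s}(R))'$ and $H^s(R)=(\tilde H^{-s}(R))'$, so that the positive-order embedding $H^{-s}(R)\hookrightarrow\tilde H^{-s}(R)$ dualizes to $H^s(R)\hookrightarrow\tilde H^s(R)$ with the same constant; as written, ``$\tilde H^s(R)$ is the dual of $H^{-s}(R)$'' is only half of what you need. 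Neither point is a genuine gap, but both would need tightening in an actual write-up.
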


\begin{lemma}
For $r\in\R$ there holds
\begin{align} \label{tech2_2}
   \|T_r v\|_{L^2(\gamma)} &\lesssim (s-1/2)^{-3/2} |v|_{H^s(\CT)}
   \quad\forall v\in H^s(\CT),\ 1/2<s\le 1,
\end{align}
with hidden constant depending on $r$.
\end{lemma}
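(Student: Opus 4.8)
The plan is to unravel the definition of $T_r$ from \eqref{def_T}, namely $T_r(v)=\bt(\bV{r}\bcurlT v)|_\gamma$, and bound it in three stages: first pass from the tangential trace on $\gamma$ to a full $H^t$-norm on each sub-domain via the trace estimate \eqref{tech1_2}, then use the mapping property \eqref{cont_V} of the single layer operator $V_r$ to move from $\bV{r}\bcurlT v$ back to $\bcurlT v$ in a space of lower order, and finally observe that $\bcurlT$ is (up to constants) a first-order operator so that $|\bcurlT v|_{H^{t-1}(\CT)}$ is controlled by $|v|_{H^t(\CT)}$, i.e.\ by the broken semi-norm of $v$ itself. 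The three $(s-1/2)^{-1/2}$ factors — one from each of the two Sobolev-norm equivalences/embeddings that are used with an order tending to $1/2$, plus the one explicit trace constant in \eqref{tech1_2} — combine to give the stated power $(s-1/2)^{-3/2}$.

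Concretely, I would argue sub-domain by sub-domain. Fix $\G_j$ and write $\bvarphi_j:=\bV{r}\bcurlT v$ restricted to $\G_j$; its $\bt$-component on $\partial\G_j\subset\gamma$ is estimated by \eqref{tech1_2} with some exponent $t\in(1/2,1]$, giving $\|\bt(\bvarphi_j)\|_{L^2(\partial\G_j)}\lesssim (t-1/2)^{-1/2}\|\bvarphi_j\|_{H^t(\G_j)}$. Next, by the continuity \eqref{cont_V}, $\|\bV{r}\psi\|_{H^t(\G_j)}\lesssim \|\psi\|_{\tH^{t-1}(\G_j)}$ for $\psi=\bcurlT v$; here one needs $t-1\in(-1/2,1/2)$, which is compatible with $t\in(1/2,1]$. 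Applying \eqref{cont_V} costs nothing extra, but relating the $\tH^{t-1}$-norm to the ordinary $H^{t-1}$-norm via \eqref{tech1_1} costs a factor $(1/2-|t-1|)^{-1}=(t-1/2)^{-1}$ (since $t-1\in(-1/2,0]$). Finally, $\bcurlT v$ on $\G_j$ is a vector of first partial derivatives of $v_j$, so $\|\bcurlT v\|_{H^{t-1}(\G_j)}\lesssim \|v_j\|_{H^t(\G_j)}\lesssim |v_j|_{H^t(\CT)}$ after discarding the $L^2$-part (one can subtract a constant on each piece, or simply absorb it since only the semi-norm appears on the right). Choosing $t=s$ and summing the squares over $j$ yields $\|T_r v\|_{L^2(\gamma)}\lesssim (s-1/2)^{-1/2}\cdot(s-1/2)^{-1}\,|v|_{H^s(\CT)}=(s-1/2)^{-3/2}|v|_{H^s(\CT)}$, with hidden constant depending on $r$ through the $V_r$-bound.

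The main technical point to watch is the bookkeeping of the constants: the estimate must be \emph{explicit} in $s$ near $1/2$, so at every step where a Sobolev space of order $\to 1/2$ intervenes I have to track which inequality — \eqref{tech1_1} or \eqref{tech1_2} — produces the blow-up and with what power. In particular one must make sure \eqref{cont_V} itself is used with a hidden constant independent of $s$ (which it is, as stated, the dependence being only on $r$ through $V_r$), and that the $\bt(\cdot)$ operation and the identification of $\bcurlT v$ with a first-order derivative introduce no further $s$-dependent factors. The only slightly delicate choice is making the exponents line up: one needs the \emph{same} auxiliary order $t$ to be $>1/2$ for the trace estimate and $<1$ so that $t-1<1/2$ for \eqref{tech1_1}/\eqref{cont_V}; taking $t=s\in(1/2,1]$ works, with the endpoint $t=1$ handled since there $t-1=0$ and \eqref{tech1_1} is trivial. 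A mild subtlety at $s=1$: then \eqref{cont_V} is applied at the endpoint and \eqref{tech1_2} at $s=1$, both admissible, and the factor $(s-1/2)^{-3/2}$ is simply bounded, consistent with the claim.
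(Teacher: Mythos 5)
Your argument is correct and is essentially the proof the paper intends: the paper simply cites \cite[Lemma 4.2]{ChoulyH_12_NDD} for the case $r=0$ and observes that the continuity \eqref{cont_V} lets the same estimates carry over to $r\neq 0$. Your chain --- trace estimate \eqref{tech1_2}, mapping property \eqref{cont_V}, norm equivalence \eqref{tech1_1} at order $t-1\in(-1/2,0]$, and the first-order nature of $\bcurlT$ combined with a quotient-space argument --- reconstructs exactly that argument with the correct bookkeeping of the $(s-1/2)$-powers.
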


\begin{proof}
For the case $r=0$, this estimate has been shown in \cite[Lemma 4.2]{ChoulyH_12_NDD}.
Using the continuity \eqref{cont_V} for wave number $r\not=0$, the same estimates apply.
\end{proof}

\begin{lemma} There holds
\begin{align} \label{tech3_1}
   \|\bcurlT v\|_{\tH^{-1/2}(\CT)} \lesssim (s-1/2)^{-1} |v|_{H^s(\CT)}
   \quad\forall v\in H^{s}(\CT),\ 1/2< s \le 1,
\end{align}
\begin{align} \label{tech3_2}
   \|\bcurlT v\|_{H^{-1/2}(\CT)} \gtrsim |v|_{H^{1/2}(\CT)}
   \quad\forall v\in H^{1/2}(\CT).
\end{align}
\end{lemma}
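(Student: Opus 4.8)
The two bounds \eqref{tech3_1} and \eqref{tech3_2} relate the broken (negative-order) norm of the piecewise surface curl of a function $v$ to the broken $H^s$-semi-norm of $v$ itself. Since all operators act face by face and the norms are broken, the plan is to reduce both inequalities to a single-face statement on a fixed reference polygon $R\subset\R^2$, namely
\[
   \|\bcurl w\|_{\tilde H^{-1/2}(R)} \lesssim (s-1/2)^{-1}\,|w|_{H^s(R)},
   \qquad
   \|\bcurl w\|_{H^{-1/2}(R)} \gtrsim |w|_{H^{1/2}(R)},
\]
and then sum over $j=1,\dots,J$, using that there are finitely many faces with uniformly Lipschitz geometry so the hidden constants are uniform. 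For the face-wise reduction one has to be slightly careful that $\bcurl_j$ maps into tangential vector fields and that the identification of $\G_j$ with a planar domain is fixed once and for all; this is a routine change of variables and I would not dwell on it.

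For the upper bound \eqref{tech3_1}: by duality, $\|\bcurl w\|_{\tilde H^{-1/2}(R)}=\sup\{\,|\<\bcurl w,\bpsi\>_R|\;;\;\bpsi\in H^{1/2}(R),\ \|\bpsi\|_{H^{1/2}(R)}\le1\,\}$. Integrating by parts on the single face (no boundary term is needed here because the test field can be taken, or extended, suitably, or one uses the very definition of $\bcurl$ weakly), $\<\bcurl w,\bpsi\>_R=\<w,\curl\bpsi\>_R$, and $\curl\bpsi\in H^{-1/2}(R)$ with norm controlled by $\|\bpsi\|_{H^{1/2}(R)}$. Hence $|\<\bcurl w,\bpsi\>_R|\le \|w\|_{\tilde H^s(R)}\,\|\curl\bpsi\|_{H^{-s}(R)}\lesssim \|w\|_{\tilde H^s(R)}\,\|\bpsi\|_{H^{1-s}(R)}\lesssim\|w\|_{\tilde H^s(R)}$ for $s>1/2$; one then invokes \eqref{tech1_1}, which is the source of the $(s-1/2)^{-1}$ factor, to replace $\|\cdot\|_{\tilde H^s}$ by $\|\cdot\|_{H^s}$ (note $|s|<1$, actually $s\le1$, but the delicate endpoint is $s\to1/2^+$ where $1/2-|s|\to 0$ gives exactly $(s-1/2)^{-1}$). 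Finally the full $H^s(R)$-norm can be replaced by the semi-norm $|w|_{H^s(R)}$ because $\bcurl$ annihilates constants: one subtracts the mean of $w$ on $R$ and uses a Poincaré-type argument, or simply observes that $\<\bcurl w,\bpsi\>_R$ is unchanged when $w$ is shifted by a constant, so the supremum is controlled by $\inf_{c}\|w-c\|_{\tilde H^s(R)}$, which is equivalent to $|w|_{H^s(R)}$ with an $s$-independent constant on a fixed bounded domain.

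For the lower bound \eqref{tech3_2} at the endpoint $s=1/2$: here one wants $|w|_{H^{1/2}(R)}\lesssim\|\bcurl w\|_{H^{-1/2}(R)}$. By the duality for $H^{-1/2}(R)=(\tilde H^{1/2}(R))'$ one has $\|\bcurl w\|_{H^{-1/2}(R)}=\sup\{|\<\bcurl w,\bpsi\>_R|;\ \bpsi\in\tilde H^{1/2}(R),\ \|\bpsi\|_{\tilde H^{1/2}(R)}\le1\}$, and again $\<\bcurl w,\bpsi\>_R=\<w,\curl\bpsi\>_R$; the point is that as $\bpsi$ ranges over (bold) $\tilde H^{1/2}(R)$ the scalar $\curl\bpsi$ ranges over a subspace of $H^{-1/2}(R)$ that is rich enough to recover the full $H^{1/2}$-semi-norm of $w$, because $|w|_{H^{1/2}(R)}\simeq\sup\{|\<w,\phi\>_R|;\ \phi\in H^{-1/2}(R)\cap(\text{mean zero}),\ \|\phi\|_{H^{-1/2}(R)}\le1\}$ and every such mean-zero $\phi$ can be written as $\curl\bpsi$ for some $\bpsi$ with $\|\bpsi\|_{\tilde H^{1/2}(R)}\lesssim\|\phi\|_{H^{-1/2}(R)}$ (a two-dimensional stream-function / regularized-Poincaré lifting on the simply connected polygon $R$). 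This lifting step is the crux of the whole lemma and the part I expect to be the main obstacle: one needs a bounded right inverse of $\curl:\tilde H^{1/2}(R)\to H^{-1/2}_{\diamond}(R)$ on a Lipschitz polygon, which is standard in the literature (it is essentially a Bogovskiĭ-type or de Rham-with-boundary-conditions result, see the tangential vector field analysis referenced via \cite{BuffaCS_02_THL}) but must be quoted carefully for the precise spaces at hand. Once this is in place, plugging $\bpsi$ into the supremum gives $\|\bcurl w\|_{H^{-1/2}(R)}\gtrsim|\<w,\phi\>_R|$ for every admissible $\phi$, hence $\gtrsim|w|_{H^{1/2}(R)}$; summing over the faces yields \eqref{tech3_2}. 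I would note explicitly that no jump terms enter because both estimates are purely broken (face-by-face), so the skeleton $\gamma$ plays no role here.
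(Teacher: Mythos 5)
Your argument for \eqref{tech3_1} contains a genuine error at its key step: you invoke \eqref{tech1_1} to replace $\|w\|_{\tH^s(R)}$ by $(s-1/2)^{-1}\|w\|_{H^s(R)}$ \emph{for $s>1/2$}. Estimate \eqref{tech1_1} is stated, and is only true, for $s\in(-1/2,1/2)$: for $s>1/2$ one has $\tH^s(R)=H_0^s(R)\subsetneq H^s(R)$, a generic $w\in H^s(R)$ (nonzero trace on $\partial R$) does not even belong to $\tH^s(R)$, and the constant $1/(1/2-|s|)$ you quote is negative. So the chain $|\<\bcurl w,\bpsi\>_R|\le\|w\|_{\tH^s(R)}\|\curl\bpsi\|_{H^{-s}(R)}\lesssim(s-1/2)^{-1}\|w\|_{H^s(R)}$ breaks down exactly where the factor $(s-1/2)^{-1}$ is supposed to come from. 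The paper obtains \eqref{tech3_1} the other way around: one uses the continuity of $\bcurl_j:H^s(\G_j)\to H^{s-1}(\G_j)$ for $s\in(1/2,1]$, so that $\bcurl_j v_j$ lives at the \emph{negative} order $s-1\in(-1/2,0]$, where \eqref{tech1_1} does apply and yields $\|\bcurl_j v_j\|_{\tH^{s-1}(\G_j)}\lesssim\frac1{1/2-|s-1|}\|\bcurl_j v_j\|_{H^{s-1}(\G_j)}=(s-1/2)^{-1}\|\bcurl_j v_j\|_{H^{s-1}(\G_j)}$; combined with $\|\cdot\|_{\tH^{-1/2}}\le\|\cdot\|_{\tH^{s-1}}$ and a quotient-space argument (to pass from $\|v_j\|_{H^s}$ to $|v_j|_{H^s}$, which is the one part of your write-up that is fine) this gives the claim. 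If you want to keep your duality formulation, the tilde must land on the test-function side, e.g.\ $\|\curl\bpsi\|_{\tH^{-s}(R)}\lesssim\|\bpsi\|_{\tH^{1-s}(R)}\lesssim(s-1/2)^{-1}\|\bpsi\|_{H^{1-s}(R)}$ with $1-s\in[0,1/2)$, paired against $\|w\|_{H^s(R)}$; you would also then have to justify the integration by parts without the boundary term, which you currently wave away although $\bpsi\in H^{1/2}(R)$ does not vanish on $\partial R$.

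For \eqref{tech3_2} the paper simply applies, face by face, the corresponding estimate of \cite[Lemma 4.1]{GaticaHH_09_BLM}, so your stream-function reconstruction is a genuinely different (and more self-contained) route. It is plausible in outline, but the lifting you call the crux needs to be quoted with the precise spaces: a Bogovski\u{\i}-type right inverse naturally maps $\tH^{-1/2}_{\diamond}(R)\to\tH^{1/2}(R)$, whereas your duality argument requires a bounded right inverse defined on the strictly larger space $H^{-1/2}(R)$ (mean-zero part); this mismatch between $H^{-1/2}$ and $\tH^{-1/2}$ at the critical order is exactly the kind of issue the cited lemma resolves, and it should not be passed over as routine.
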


\begin{proof}
The first estimate can be proved by using the equivalence \eqref{tech1_1}
of $\tH^{s}(\G_j)$ and $H^s(\G_j)$-norms
for $s\in (-1/2,0]$, the continuity of $\bcurlS{\G_j}:\;H^s(\G_j)\to H^{s-1}(\G_j)$
($s\in (1/2,1]$) and a quotient-space argument (cf.~\cite{Heuer_14_OEF}).
For details see \cite[(4.15)]{ChoulyH_12_NDD}.
Bound \eqref{tech3_2} follows by applying face-wise the corresponding estimate from
\cite[Lemma 4.1]{GaticaHH_09_BLM}.
\end{proof}

\begin{lemma} \label{la_tech4}
For $\nu>0$ and $s>1/2$ there holds
\begin{align} \label{tech4}
   \|v\|_{L^2(\G)} \lesssim \|v\|_{H^s_\nu(\CT)}
   \quad\forall v\in H^s(\CT).
\end{align}
\end{lemma}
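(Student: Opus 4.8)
The plan is a compactness (Peetre-type) argument, exploiting that $H^s(\CT)=\Pi_j H^s(\G_j)$ embeds compactly into $L^2(\G)$ for $s>1/2$ and that the seminorm and the jump functional are both lower semicontinuous in a suitable sense. Suppose \eqref{tech4} fails. Then there is a sequence $(v_n)\subset H^s(\CT)$ with $\|v_n\|_{L^2(\G)}=1$ and $\|v_n\|_{H^s_\nu(\CT)}\to 0$, so in particular $|v_n|_{H^s(\CT)}\to 0$ and $\|\jump{v_n}\|_{L^2(\gamma)}\to 0$. Since $\|v_n\|_{L^2(\G)}=1$, the sequence is bounded in $H^s(\CT)$, so by the compact embedding $H^s(\G_j)\hookrightarrow L^2(\G_j)$ on each face a subsequence (not relabelled) converges in $L^2(\G)$ to some $v$ with $\|v\|_{L^2(\G)}=1$.

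Next I would upgrade this to convergence in the full broken norm. Passing to a further subsequence, $v_n\to v$ a.e.\ on each $\G_j$, so Fatou's lemma applied to the integrand in \eqref{Hs} gives $|v|_{H^s(\CT)}^2\le\liminf_n|v_n|_{H^s(\CT)}^2=0$. Hence $|v_n-v|_{H^s(\CT)}\le|v_n|_{H^s(\CT)}+|v|_{H^s(\CT)}\to 0$, which together with $\|v_n-v\|_{L^2(\G)}\to 0$ yields $v_n\to v$ in $H^s(\CT)$. By \eqref{tech1_2} the face traces $H^s(\G_j)\to L^2(\partial\G_j)$ are bounded, hence $v\mapsto\jump{v}$ is bounded from $H^s(\CT)$ to $L^2(\gamma)$ (each piece of $\jump{v}$ being a combination of face traces, with signs fixed by the chosen tangential orientation). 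Therefore $\jump{v_n}\to\jump{v}$ in $L^2(\gamma)$, and since $\|\jump{v_n}\|_{L^2(\gamma)}\to 0$ we conclude $\jump{v}=0$.

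It remains to show $v\equiv 0$, contradicting $\|v\|_{L^2(\G)}=1$. From $|v|_{H^s(\CT)}=0$ and connectedness of each face, $v$ is constant on each $\G_j$, say $v|_{\G_j}\equiv c_j$. Vanishing of $\jump{v}$ on the interior skeleton $\gamma\setminus\partial\G$ forces $c_j=c_{j'}$ whenever $\bar\G_j$ and $\bar\G_{j'}$ share an edge; since the faces form an edge-connected decomposition of the connected surface $\G$, all $c_j$ coincide, with common value $c$. Finally, $\G$ being open, some face meets $\partial\G$ in a set of positive one-dimensional measure, and there $\jump{v}$ equals the trace of $v$ by construction, so $c=0$ and $v\equiv 0$. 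This proves \eqref{tech4}; note that the hidden constant is allowed to depend on $\nu$, $s$ and the fixed decomposition $\CT$, none of which involves the mesh size, so this is consistent with the convention for $\lesssim$.

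The only slightly delicate point is the step that turns $L^2$-convergence of $(v_n)$ into convergence of the jumps $\jump{v_n}$: it needs convergence in the full $H^s(\CT)$-norm, which is available precisely because the limit $v$ has vanishing broken $H^s$-seminorm. A constructive alternative avoids compactness altogether: a face-wise fractional Poincar\'e inequality $\|v_j-\bar v_j\|_{L^2(\G_j)}\lesssim|v_j|_{H^s(\G_j)}$ (with $\bar v_j$ the face average) reduces the claim to controlling the $\bar v_j$, which one estimates by stepping through edge-adjacent faces and anchoring at $\partial\G$, again using \eqref{tech1_2} for the edge traces.
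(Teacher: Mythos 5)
Your proof is correct and follows essentially the same route as the paper: a Poincar\'e--Friedrichs-type compactness argument using the compact embedding of $H^s(\CT)$ into $L^2(\G)$, the boundedness of the jump functional on $H^s(\CT)$, and the fact that the kernel of $|\cdot|_{H^s(\CT)}$ (piecewise constants) is eliminated by the jumps, anchored at $\partial\G$ where $\jump{\cdot}$ reduces to the trace. The paper only sketches this; your write-up supplies the details (including the step upgrading $L^2$-convergence to convergence of the jumps), and is consistent with the intended argument.
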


\begin{proof}
The proof is a slight variation of the proof of the Poincar\'e-Friedrichs
inequality by a compactness argument. In this case, we use the compactness of
the embedding of $H^s(\CT)$ in $L^2(\G)$ and the boundedness of the functional
$\<\jump{\cdot},\jump{\cdot}\>_\gamma^{1/2}$ on $H^s(\CT)$. Furthermore, the kernel
of $|\cdot|_{H^s(\CT)}$ consists of $\CT$-piecewise constant functions which are
eliminated by the functional $\<\jump{\cdot},\jump{\cdot}\>_\gamma^{1/2}$
(note that the jump $\jump{\cdot}$ reduces to the trace operator on $\partial\G$).
\end{proof}

\subsection{Consistency, boundedness, discrete ellipticity, and Aubin-Nitsche trick}
\label{sub_consistent}

In this section we show four essential ingredients for the proof of Theorem~\ref{thm_Cea}.
These are the consistency of the non-conforming discrete scheme (Lemma~\ref{la_consistent}),
the boundedness of the sesquilinear form (Lemma~\ref{la_cont}), its discrete ellipticity
in the form of G\r{a}rding inequalities with respect to two norms
(Lemma~\ref{la_ell} and Corollary~\ref{cor_ell}), and an error estimate in a lower-order norm
based on the Aubin-Nitsche trick (Lemma~\ref{la_AN}).

\begin{lemma} \label{la_consistent}
Let Assumption~1 hold true. Then, for $\nu>0$, the discrete scheme \eqref{bem_nitsche} is consistent.
That is, the solution $u$ of \eqref{IE} satisfies
\[
   A_k(u,v) = \<f, v\>_\G \quad\forall v\in X_h.
\]
\end{lemma}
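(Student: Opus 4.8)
The goal is to show that the exact solution $u\in\tH^{\rreg}(\G)$ of \eqref{IE}, when inserted into the discrete sesquilinear form $A_k(\cdot,\cdot)$, reproduces the right-hand side $\<f,v\>_\G$ for every discrete test function $v\in X_h$. The starting point is that $u\in\tH^{\rreg}(\G)$ with $\rreg>1/2$, so by Assumption~1 the quantities $T_k(u)=\bt(\bV{k}\bcurlT u)|_\gamma$ are well-defined elements of $L^2(\gamma)$ (via Lemma with \eqref{tech2_2}), and since $u$ is globally continuous across $\gamma$ and has vanishing trace on $\partial\G$, we have $\jump{u}=0$ on all of $\gamma$. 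Hence in the expression $A_k(u,v)$ the three terms involving $\jump{u}$ — namely $\<\jump{u},T_{-k}(v)\>_\gamma$ and $\nu\<\jump{u},\jump{v}\>_\gamma$ — vanish identically. What remains is
\[
   A_k(u,v)
   =
   \<\bV{k}\bcurlT u,\bcurlT v\>_\CT
   - k^2\<\bV{k}\bn\,u,\bn\,v\>_\G
   + \<T_k(u),\jump{v}\>_\gamma .
\]

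The second step is to undo the breaking of the curl on the first term. Since $u\in\tH^{\rreg}(\G)$ is globally defined, $\bcurlT u$ coincides with the global $\bcurl u\in\tH^{\rreg-1}(\G)$ (there is no jump contribution because $u$ itself is continuous), so $\<\bV{k}\bcurlT u,\bcurlT v\>_\CT = \<\bV{k}\bcurl u,\bcurlT v\>_\CT$. Now I want to integrate by parts, moving the broken curl off $v$ and onto $\bV{k}\bcurl u$, using the defining relation \eqref{def_t} of the tangential trace and jump operators: with $\bvarphi := \bV{k}\bcurl u$ one has
\[
   \<\bcurlS{\CT}\bvarphi,v\>_\CT - \<\bt(\bvarphi),\jump{v}\>_\gamma = \<\bvarphi,\bcurlT v\>_\CT .
\]
Recognizing $\bt(\bvarphi)|_\gamma = T_k(u)$, this is exactly the identity that makes the $\<T_k(u),\jump{v}\>_\gamma$ term in $A_k(u,v)$ combine with $\<\bV{k}\bcurl u,\bcurlT v\>_\CT$ to give $\<\curlS{\CT}(\bV{k}\bcurl u),v\>_\CT$. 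Since $\bV{k}\bcurl u$ is smooth enough (by \eqref{cont_V}, $\bV{k}\bcurl u\in H^{\rreg}(\G)$) for its curl to be taken globally, $\curlS{\CT}(\bV{k}\bcurl u)=\curl(\bV{k}\bcurl u)$ and the piecewise pairing collapses to the global one $\<\curl(\bV{k}\bcurl u),v\>_\G$. Therefore $A_k(u,v) = \<\curl(\bV{k}\bcurl u),v\>_\G - k^2\<\bV{k}\bn\,u,\bn\,v\>_\G$. The final step is to identify this with $\<W_ku,v\>_\G$: applying the integration-by-parts identity \eqref{def_t} in reverse (now with zero jump, since both $u$ and $v$ could be taken in $\tH^{1/2}(\G)$ in that direction — but more directly, by comparing with \eqref{WV}) shows that $\<\curl(\bV{k}\bcurl u),v\>_\G = \<\bV{k}\bcurl u,\bcurl v\>_\G$ whenever the jump terms drop, so the whole expression equals the right-hand side of \eqref{WV}, namely $\<W_ku,v\>_\G$, which by \eqref{weak_org} equals $\<f,v\>_\G$.

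\textbf{Main obstacle.} The delicate point is not the formal manipulation but making sure every integration by parts is legitimate for $u\in\tH^{\rreg}(\G)$ with only $\rreg>1/2$, and for $v\in X_h$ which is merely broken-$H^1$ and discontinuous across $\gamma$. Concretely, the tangential-trace/jump identity \eqref{def_t} was introduced for "sufficiently smooth" $\bvarphi$ and piecewise-smooth $v$; I must check that $\bvarphi=\bV{k}\bcurl u\in H^{\rreg}(\G)$ (hence with a well-defined $L^2$-tangential trace on $\gamma$ by \eqref{tech1_2}) and that $v\in X_h\subset H^1(\CT)$ (hence with $L^2$-jumps) are enough to justify \eqref{def_t} by density — the same density argument underlying the very definition of $T_r$ in \eqref{def_T}. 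A second subtlety is the global-versus-broken distinction: one must argue carefully that, because $u$ has no jumps, $\bcurlT u = \bcurl u$ as a global distribution and the curl of $\bV{k}\bcurl u$ may be formed globally, so that the only genuinely broken object in the whole computation is the test function $v$. Once these regularity bookkeeping points are settled, the identity $A_k(u,v)=\<W_ku,v\>_\G=\<f,v\>_\G$ follows by combining \eqref{WV}, \eqref{def_t} and \eqref{weak_org}, which is exactly the Galerkin-orthogonality-type consistency claimed.
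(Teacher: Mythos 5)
Your argument is correct and follows essentially the same route as the paper's proof: the jump terms in $u$ vanish by continuity and the vanishing trace on $\partial\G$, the integration-by-parts identity \eqref{def_t} is applied with $\bvarphi=\bV{k}\bcurlT u$ to absorb the $\<T_k(u),\jump{v}\>_\gamma$ term, and the conclusion follows from \eqref{def_T}, \eqref{WV} and \eqref{weak_org}; the regularity bookkeeping you flag is exactly what the paper defers to the cited Laplacian references. (Minor slip: you announce ``three terms involving $\jump{u}$'' but correctly list only the two that actually occur.)
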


\begin{proof}
By Assumption~1, $u\in\tH^r(\G)$ for an $r>1/2$. In particular, $u$ is continuous and vanishes on
$\partial\G$ in the sense of traces. It follows that
\[
   A_k(u,v) =
   \<\bV{k} \bcurlT u, \bcurlT v\>_{\CT}
   -
   k^2 \<\bV{k} \bn\, u, \bn\, v\>_\G
   +
   \<T_k(u), \jump{v}\>_\gamma
   \quad\forall v\in X_h.
\]
The integration-by-parts formula \eqref{def_t} holds for $\bvarphi:=\bV{k}\bcurlT u$
and $v\in X_h$ (see \cite{GaticaHH_09_BLM,ChoulyH_12_NDD} for details concerning
the Laplacian; they also apply to the Helmholtz case).
The definition \eqref{def_T} of $T_k$ and relation \eqref{WV} then show the assertion.
\end{proof}

\begin{lemma} \label{la_cont}
There holds
\begin{align*}
   |A_k(v,w)|
   &\lesssim 
   \max\{\nu (s-1/2)^{-1}, (s-1/2)^{-2}\} \|v\|_{H^{s}(\CT)} \|w\|_{H^{s}(\CT)}
   \quad\forall v, w \in H^s(\CT),\ s>1/2.
\end{align*}
\end{lemma}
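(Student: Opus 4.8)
The plan is to bound each of the five terms in the definition of $A_k(v,w)$ separately, using the preliminary lemmas already collected. The first two terms are the "bulk" terms coming from relation \eqref{WV}, and the remaining three are the Nitsche coupling and penalty terms on $\gamma$.

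For the bulk terms: to bound $\<\bV{k}\bcurlT v,\bcurlT w\>_\CT$ I would write it face-wise and use that $\bV{k}=V_k$ acting component-wise maps $\tH^{-1/2}(\CT)\to H^{1/2}(\CT)$ by the continuity \eqref{cont_V} (with $s=1/2$), followed by a duality pairing of $H^{1/2}(\CT)$ against $\tH^{-1/2}(\CT)$; then apply \eqref{tech3_1} on both factors, giving a bound $\lesssim (s-1/2)^{-2}|v|_{H^s(\CT)}|w|_{H^s(\CT)}$. For the term $k^2\<\bV{k}\bn\,v,\bn\,w\>_\G$, the factor $\bn\,v$ lies in $L^2(\G)\subset\tH^{-1/2}(\G)$ with norm controlled by $\|v\|_{H^s(\CT)}$ (indeed by $\|v\|_{L^2(\G)}$), and $\bV{k}$ maps $\tH^{-1/2}(\G)\to H^{1/2}(\G)$; pairing against $\bn\,w\in L^2(\G)$ gives a bound $\lesssim \|v\|_{H^s(\CT)}\|w\|_{H^s(\CT)}$ with constant independent of $s$ (here one only needs $s>0$, but to keep things uniform one reads off a harmless $(s-1/2)$-factor or just absorbs it into the max).

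For the $\gamma$-terms: the penalty term $\nu\<\jump{v},\jump{w}\>_\gamma$ is bounded via Cauchy–Schwarz in $L^2(\gamma)$ by $\nu\|\jump{v}\|_{L^2(\gamma)}\|\jump{w}\|_{L^2(\gamma)}$, and then \eqref{tech1_2} applied face-wise to the traces on $\gamma$ gives $\lesssim \nu (s-1/2)^{-1}\|v\|_{H^s(\CT)}\|w\|_{H^s(\CT)}$. The two coupling terms $\<T_k(v),\jump{w}\>_\gamma$ and $\<\jump{v},T_{-k}(w)\>_\gamma$ are handled by Cauchy–Schwarz in $L^2(\gamma)$, bounding one factor by \eqref{tech2_2} (the $T_{\pm k}$ estimate, which holds for any real wave number with constant depending on $k$) giving $(s-1/2)^{-3/2}|v|_{H^s(\CT)}$, and the other factor, a jump, by \eqref{tech1_2} giving $(s-1/2)^{-1/2}\|w\|_{H^s(\CT)}$; the product is $(s-1/2)^{-2}\|v\|_{H^s(\CT)}\|w\|_{H^s(\CT)}$. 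Collecting all five contributions and taking the worst case yields the factor $\max\{\nu(s-1/2)^{-1},(s-1/2)^{-2}\}$ as claimed.

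The only mildly delicate point is making sure the duality pairings on the broken spaces are legitimate: the pairing $\<\cdot,\cdot\>_\CT$ between $\tH^{s}(\CT)$ and $H^{-s}(\CT)$ was defined in Section~\ref{sec_discrete}, and one must check that $\bcurlT v\in\tH^{-1/2}(\CT)$ (which is exactly the content of \eqref{tech3_1}, using $s>1/2$) while $\bV{k}\bcurlT w\in H^{1/2}(\CT)$ pairs against it—i.e. that the single-layer operator indeed lands in the correct space face-wise, which follows from \eqref{cont_V} applied on each $\G_j$ together with support considerations. No genuine obstacle is expected; this is essentially the Helmholtz analogue of the boundedness step in \cite{ChoulyH_12_NDD}, with \eqref{cont_V} supplying the single-layer mapping property uniformly in the wave number and all $s$-dependence of the constants tracked explicitly through the preliminary lemmas.
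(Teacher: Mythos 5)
Your proposal is correct and follows essentially the same route as the paper's proof: a term-by-term bound of the five contributions to $A_k$, using the mapping property \eqref{cont_V} with \eqref{tech3_1} for the $\bcurlT$ term, the $L^2\subset\tH^{-1/2}$ embedding for the $\bn\,v$ term, and \eqref{tech1_2} together with \eqref{tech2_2} for the skeleton terms, collecting the worst powers of $(s-1/2)$ into the stated maximum.
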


\begin{proof}
By the continuity of $\bV{k}$, and \eqref{tech3_1} we obtain
\begin{align*}
   |\<\bV{k}\bcurlT v, \bcurlT w\>_\G|
   &\lesssim
   \|\bcurlT v\|_{\tH^{-1/2}(\G)} \|\bcurlT w\|_{\tH^{-1/2}(\G)}\\
   &\lesssim
   (s-1/2)^{-2} |v|_{H^s(\CT)} |w|_{H^s(\CT)}
   \quad\forall v, w\in H^s(\CT),\ s>1/2,
\end{align*}
and
\begin{align} \label{pf_cont_1}
   |\<\bV{k} \bn\,v, \bn\,w\>|
   &\lesssim
   \|\bn\,v\|_{\tH^{-1/2}(\G)} \|\bn\,w\|_{\tH^{-1/2}(\G)}
   \lesssim
   \|v\|_{L^2(\G)} \|w\|_{L^2(\G)}
   \quad\forall v, w\in L^2(\G).
\end{align}
Combinations of \eqref{tech1_2} with the Cauchy-Schwarz and triangle inequalities, and estimate
\eqref{tech2_2}, yield
\begin{align*}
   |\<\jump{v},\jump{w}\>_\gamma|
   &\lesssim
   (s-1/2)^{-1} \|v\|_{H^s(\CT)} \|w\|_{H^s(\CT)}
   \quad\forall v, w\in H^s(\CT),\ s>1/2
\end{align*}
and
\begin{align*}
   |\<T_k(v), \jump{w}\>_\gamma|
   +
   |\<\jump{v}, T_{-k}(w)\>_\gamma|
   &\lesssim
   (s-1/2)^{-2} \|v\|_{H^s(\CT)} \|w\|_{H^s(\CT)}
   \quad\forall v, w\in H^s(\CT),\ s>1/2.
\end{align*}
The previous bounds prove the assertion.
\end{proof}

\begin{lemma} \label{la_ell}
Let Assumption~3 hold true.
For any $\epsilon>0$ there exists $c_G>0$ such that for
$\nu\gtrsim\underline{h}^{-\epsilon}$ there holds
\[
   |A_k(v,v)|
   \gtrsim
   \|v\|_{H^{1/2}_\nu(\CT)}^2
   -
   c_G (s-1/2)^{-2} \underline{h}^{1-2s}
   \|v\|_{H^{1/2-\rcomp}(\CT)}^2
   \quad\forall v\in X_h,\ s\in (1/2,1].
\]
\end{lemma}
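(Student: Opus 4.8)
The plan is to establish the desired discrete G\r{a}rding inequality by comparing $A_k$ to the elliptic, conforming-like form associated with the Laplace case $W_0$ and controlling the difference via Assumption~3. First I would split $A_k(v,v)$ into a ``principal'' part and a ``perturbation'' part. The principal part is what one gets by replacing $\bV{k}$ and $\bV{-k}$ with $\bV{0}$ throughout, namely
\[
   A_0(v,v)
   =
   \<\bV{0}\bcurlT v,\bcurlT v\>_\CT
   +
   \<T^0_0(v),\jump{v}\>_\gamma
   +
   \<\jump{v},T^0_0(v)\>_\gamma
   +
   \nu\|\jump{v}\|_{L^2(\gamma)}^2 ,
\]
for which the coercivity estimate $|A_0(v,v)|\gtrsim \|v\|_{H^{1/2}_\nu(\CT)}^2$ for $\nu\gtrsim\underline h^{-\epsilon}$ is precisely the Laplace result proved in \cite{ChoulyH_12_NDD}; I would quote it (it rests on \eqref{tech3_2} for the curl term, a trace/inverse-inequality argument absorbing the cross terms $\<T^0_0(v),\jump{v}\>_\gamma$ into $|v|_{H^{1/2}(\CT)}^2$ and $\nu\|\jump{v}\|_{L^2(\gamma)}^2$ with the $\underline h^{-\epsilon}$ scaling, and positive-definiteness of $\bV{0}$). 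It then remains to bound the perturbation $A_k(v,v)-A_0(v,v)$ by the stated lower-order term $c_G (s-1/2)^{-2}\underline h^{1-2s}\|v\|_{H^{1/2-\rcomp}(\CT)}^2$.

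The perturbation consists of four pieces: the curl term $\<(\bV{k}-\bV{0})\bcurlT v,\bcurlT v\>_\CT$, the two cross terms coming from $T_{\pm k}-T_0 = \bt((\bV{\pm k}-\bV{0})\bcurlT(\cdot))|_\gamma$, and the full mass term $-k^2\<\bV{k}\bn\,v,\bn\,v\>_\G$. For the curl term, Assumption~3 gives $\bV{k}-\bV{0}:\tH^{-1/2}(\CT)\to H^{1/2+\rcomp}(\CT)$ boundedly (applied face-wise), so I would estimate
\[
   |\<(\bV{k}-\bV{0})\bcurlT v,\bcurlT v\>_\CT|
   \lesssim
   \|(\bV{k}-\bV{0})\bcurlT v\|_{H^{1/2+\rcomp}(\CT)}
   \|\bcurlT v\|_{\tH^{-1/2-\rcomp}(\CT)}
   \lesssim
   \|\bcurlT v\|_{\tH^{-1/2}(\CT)}\,\|\bcurlT v\|_{\tH^{-1/2-\rcomp}(\CT)},
\]
and then use that $\bcurlT:\tH^{1/2-\rcomp}(\CT)\to\tH^{-1/2-\rcomp}(\CT)$ is bounded to get the factor $\|v\|_{H^{1/2-\rcomp}(\CT)}$, while the other factor $\|\bcurlT v\|_{\tH^{-1/2}(\CT)}$ is bounded, via an inverse inequality on the quasi-uniform mesh, by $\underline h^{1/2-s}$ times $\|\bcurlT v\|_{\tH^{-s}(\CT)}$, hence by $\underline h^{1/2-s}(s-1/2)^{-1}|v|_{H^s(\CT)}$ using \eqref{tech3_1}; combined with a second inverse inequality taking $|v|_{H^s(\CT)}$ back down to $\|v\|_{H^{1/2-\rcomp}(\CT)}$ at the cost of another $\underline h^{1/2-s-\rcomp}$-type power this yields the claimed $\underline h^{1-2s}$ scaling. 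The mass term $k^2\<\bV{k}\bn\,v,\bn\,v\>_\G$ is handled more crudely: by \eqref{pf_cont_1} it is $\lesssim\|v\|_{L^2(\G)}^2$, and since $L^2(\G)$ embeds between $H^{1/2-\rcomp}(\CT)$ and $H^s(\CT)$ an inverse-inequality interpolation argument again produces the factor $\underline h^{1-2s}\|v\|_{H^{1/2-\rcomp}(\CT)}^2$ (here one does not even need Assumption~3, only $\rcomp\le 1/2$). The two cross terms are estimated in the same spirit: $\<(T_{\pm k}-T_0)v,\jump{v}\>_\gamma$ is bounded using the $L^2(\gamma)$-continuity of $\bt$, the mapping property of $\bV{\pm k}-\bV{0}$ from Assumption~3 to gain smoothness (so that the trace onto $\gamma$ is controlled without a negative power of $s-1/2$), and $\|\jump{v}\|_{L^2(\gamma)}\le\nu^{-1/2}\|v\|_{H^{1/2}_\nu(\CT)}$; choosing to absorb the resulting $\nu^{-1/2}$-small quantity, one arranges the bound to again take the form $\underline h^{1-2s}(s-1/2)^{-2}\|v\|_{H^{1/2-\rcomp}(\CT)}^2$ up to an arbitrarily small multiple of $\|v\|_{H^{1/2}_\nu(\CT)}^2$, which is absorbed into the principal part.

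Collecting, $|A_k(v,v)|\ge |A_0(v,v)| - |A_k(v,v)-A_0(v,v)| \gtrsim \|v\|_{H^{1/2}_\nu(\CT)}^2 - c_G(s-1/2)^{-2}\underline h^{1-2s}\|v\|_{H^{1/2-\rcomp}(\CT)}^2$, which is the assertion. The main obstacle I anticipate is bookkeeping the powers of $\underline h$ and of $(s-1/2)^{-1}$ correctly: each inverse inequality costs a power of $\underline h$ and each application of \eqref{tech3_1}/\eqref{tech2_2} costs a power of $(s-1/2)^{-1}$, and one must verify that the net exponent of $\underline h$ is exactly $1-2s$ (not merely $\le$ some larger negative power) and that of $(s-1/2)^{-1}$ is at most $2$, which requires choosing the intermediate Sobolev indices (e.g. $-1/2-\rcomp$ for $\bcurlT v$ and the pivoting through $L^2$ for the mass term) with some care and exploiting $\rcomp\le 1/2$. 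A secondary subtlety is that $\bV{k}-\bV{0}$ and hence $T_{\pm k}-T_0$ must be applied face-wise to the broken argument $\bcurlT v$; one should note that Assumption~3 is a statement on the closed/open surface $\G$ but, restricted and combined with localization, still yields the required face-wise bounds, or equivalently one argues directly with $\bcurlT$ and uses that the global $\bcurl$ of a $\CT$-piecewise function differs from $\bcurlT$ only by a distribution supported on $\gamma$, which is controlled by the jump terms already present in the norm.
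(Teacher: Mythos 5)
Your overall architecture --- a coercive ``Laplace'' principal part plus a compact perturbation controlled through Assumption~3, with $\nu\gtrsim\underline{h}^{-\epsilon}$ used to absorb the Nitsche cross terms --- is the same as the paper's, and your treatment of the mass term is fine. (For the cross terms the paper does not split $T_{\pm k}$ into $T_0$ plus a remainder; it bounds $\<T_{\pm k}v,\jump{v}\>_\gamma$ directly via \eqref{tech2_2}, which holds for every wave number, so your detour through Assumption~3 there is unnecessary, though not wrong.) The genuine problem is your estimate of the curl perturbation $\<(\bV{k}-\bV{0})\bcurlT v,\bcurlT v\>_\CT$. Having arrived at a product of the form $\underline{h}^{1/2-s}(s-1/2)^{-1}|v|_{H^s(\CT)}\cdot\|v\|_{H^{1/2-\rcomp}(\CT)}$, you propose a \emph{second} inverse inequality taking $|v|_{H^s(\CT)}$ all the way down to $\|v\|_{H^{1/2-\rcomp}(\CT)}$ at a cost of $\underline{h}^{1/2-\rcomp-s}$. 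The net exponent is then $1-2s-\rcomp$, not $1-2s$; since $\underline{h}^{1-2s-\rcomp}=\underline{h}^{-\rcomp}\,\underline{h}^{1-2s}$ and $\underline{h}^{-\rcomp}\to\infty$, this bound is strictly weaker than the one claimed and does not imply it. (There is also a spurious extra factor $\underline{h}^{1/2-s}$ in your chain: \eqref{tech3_1} already gives $\|\bcurlT v\|_{\tH^{-1/2}(\CT)}\lesssim(s-1/2)^{-1}|v|_{H^s(\CT)}$ with no inverse inequality; the inverse step belongs on $|v|_{H^s(\CT)}\lesssim\underline{h}^{1/2-s}|v|_{H^{1/2}(\CT)}$.) The loss of $\underline{h}^{-\rcomp}$ is not cosmetic: in the subsequent uniqueness and C\'ea arguments the exponent $1-2s$ is needed with $s$ arbitrarily close to $1/2$, and an additional $-2\rcomp$ in the exponent would destroy the positivity of the leading coefficient there.

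The fix is the one the paper uses (and which you yourself invoke for the cross terms): stop the downward cascade at $H^{1/2}$. Bound one factor by $(s-1/2)^{-1}\underline{h}^{1/2-s}|v|_{H^{1/2}(\CT)}$ and the other by $\rcompinv\|v\|_{H^{1/2-\rcomp}(\CT)}$ (via the continuity of $\bcurl_j:\tH^{1/2-\rcomp}(\G_j)\to\tH^{-1/2-\rcomp}(\G_j)$ together with \eqref{tech1_1}), then apply Young's inequality: the term $\rho\,|v|_{H^{1/2}(\CT)}^2$ is absorbed into the coercive principal part for $\rho$ small, and the remainder is exactly $\rho^{-1}\rcompinvt(s-1/2)^{-2}\underline{h}^{1-2s}\|v\|_{H^{1/2-\rcomp}(\CT)}^2$, i.e., the compact term in the assertion. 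With this correction your argument coincides with the paper's proof.
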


\begin{proof}
Application of \eqref{tech3_2}, \eqref{pf_cont_1},
and the fact that $\nu\gtrsim 1$ prove that there exists $c_G>0$
($G$ refers to G\r{a}rding) such that
\begin{align} \label{pf_ell_1}
   |\<\bV{0} \bcurlT v, \bcurlT v\>_{\CT}
   - &k^2 \<\bV{k} \bn\,v, \bn\,v\>_\G
   + \nu \<\jump{v}, \jump{v}\>_\gamma|
   \nonumber\\
   &\gtrsim
   \|v\|_{H^{1/2}_\nu(\CT)}^2 - c_G\|v\|_{L^2(\G)}^2\quad\forall v\in X_h.
\end{align}
We are left with bounding the remaining terms.

By Assumption~3, \eqref{tech3_1} and the inverse property we can bound
\begin{align} \label{pf_ell_2}
   |\<(\bV{k}-\bV{0}) \bcurlT v, \bcurlT v\>_{\CT}|
   &\lesssim
   \|\bcurlT v\|_{\tH^{-1/2}(\G)}
   \|\bcurlT v\|_{\tH^{-1/2-\rcomp}(\G)}
   \nonumber\\
   &\lesssim
   (s-1/2)^{-1}
   |v|_{H^s(\CT)}
   \|\bcurlT v\|_{\tH^{-1/2-\rcomp}(\G)}
   \nonumber\\
   &\lesssim
   (s-1/2)^{-1}
   \underline{h}^{1/2-s}
   |v|_{H^{1/2}(\CT)}
   \|\bcurlT v\|_{\tH^{-1/2-\rcomp}(\G)},
\end{align}
for any $v\in X_h$ and $s\in(1/2,1]$. In order to estimate the last term above we use that
there holds
\[
   \|\bcurl_j v_j\|_{\tH^{-1/2-\rcomp}(\G_j)}
   \lesssim
   \|v_j\|_{\tH^{1/2-\rcomp}(\G_j)},\ j=1,\ldots, J.
\]
This follows from Fourier analysis, considering $\G_j$ as a sub-domain of $\R^2$,
and since $\|\phi\|_{\tH^t(\G_j)}\simeq \|\phi^0\|_{H^t(\R^2)}$ for $t\in[-1,1]$
and $\phi\in C_0^\infty(\G_j)$ with $\phi^0$ denoting ist extension by $0$.
By a standard domain decomposition estimate and bound \eqref{tech1_1} we then conclude that
\begin{align} \label{pf_ell_3}
   \|\bcurlT v\|_{\tH^{-1/2-\rcomp}(\G)}
   &\lesssim
   \|\bcurlT v\|_{\tH^{-1/2-\rcomp}(\CT)}
   \lesssim
   \|v\|_{\tH^{1/2-\rcomp}(\CT)}
   \lesssim
   \rcompinv
   \|v\|_{H^{1/2-\rcomp}(\CT)}
\end{align}
for any $v\in X_h$.
Combination of \eqref{pf_ell_2} and \eqref{pf_ell_3}, and Young's inequality, prove that
\begin{align} \label{pf_ell_4}
   |\<(\bV{k}-\bV{0}) \bcurlT v, \bcurlT v\>_{\CT}|
   &\lesssim
   \rho
   |v|_{H^{1/2}(\CT)}^2
   +
   \rho^{-1} \rcompinvt
   (s-1/2)^{-2} \underline{h}^{1-2s}
   \|v\|_{H^{1/2-\rcomp}(\CT)}^2
\end{align}
for any $v\in X_h$, $s\in (1/2,1]$, and $\rho>0$.
The two remaining terms are included analogously as in the case of the Laplacian
($k=0$) considered in \cite[Lemma~4.4]{ChoulyH_12_NDD}. Specifically, using
\eqref{tech2_2} with $s=1/2+\epsilon$, Young's inequality and the inverse property,
one proves that
\begin{align} \label{pf_ell_20}
   |\<T_k v, \jump{v}\>_\gamma| + |\<\jump{v}, T_{-k} v\>_\gamma|
   &\lesssim
   \underline{h}^{-2\epsilon} \frac\delta{\epsilon^3} |v|_{H^{1/2}(\CT)}^2
   +
   \frac 1\delta \|\jump{v}\|_{L^2(\gamma)}^2
   \quad\forall v\in X_h,\ \delta>0,\ \epsilon>0.
\end{align}
A combination of \eqref{pf_ell_1}, \eqref{pf_ell_4} with $\rho$ small enough,
and \eqref{pf_ell_20} shows that there exist constants $c_1, c_2, c_G>0$
($c_G$ possibly different from before) such that
\begin{align*}
   |A_k(v,v)|
   &\gtrsim
   \Bigl(1 - c_1\underline{h}^{-2\epsilon} \frac\delta{\epsilon^3}\Bigr)
   |v|_{H^{1/2}(\CT)}^2
   +
   \Bigl(\nu - \frac {c_2}{\delta}\Bigr)
   \|\jump{v}\|_{L^2(\gamma)}^2
   -
   c_G (s-1/2)^{-2} \underline{h}^{1-2s}
   \|v\|_{H^{1/2-\rcomp}(\CT)}^2
\end{align*}
for any $v\in X_h$, $\delta>0, \epsilon>0$, $s\in (1/2,1]$.
With $\delta:=\underline{h}^{3\epsilon}$, selecting $\nu\gtrsim \underline{h}^{-4\epsilon}$,
and replacing $4\epsilon$ by $\epsilon$ we obtain the assertion.
\end{proof}

\begin{corollary} \label{cor_ell}
Let Assumption~3 hold true, and let $\epsilon,\epsilon'>0$ be given.
There exists $c_G>0$ such that for
$\nu\gtrsim\underline{h}^{-\epsilon}$ there holds
\[
   |A_k(v,v)|
   \gtrsim
   \underline{h}^{2\epsilon'} \|v\|_{H^{1/2}(\CT)}^2
   -
   c_G (s-1/2)^{-2} \underline{h}^{1-2s}
   \|v\|_{H^{1/2-\rcomp}(\CT)}^2
   \quad\forall v\in X_h,\ s\in (1/2,1].
\]
\end{corollary}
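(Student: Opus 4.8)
The plan is to deduce Corollary~\ref{cor_ell} from Lemma~\ref{la_ell} by a simple interpolation/norm-comparison argument. Lemma~\ref{la_ell} already gives
\[
   |A_k(v,v)|
   \gtrsim
   \|v\|_{H^{1/2}_\nu(\CT)}^2
   -
   c_G (s-1/2)^{-2} \underline{h}^{1-2s}
   \|v\|_{H^{1/2-\rcomp}(\CT)}^2
   \quad\forall v\in X_h,
\]
so everything reduces to showing that the clean first term $\|v\|_{H^{1/2}_\nu(\CT)}^2$ dominates $\underline h^{2\epsilon'}\|v\|_{H^{1/2}(\CT)}^2$ for discrete $v$, i.e.\ that
\[
   \|v\|_{H^{1/2}(\CT)}^2 \lesssim \underline h^{-2\epsilon'}\,\|v\|_{H^{1/2}_\nu(\CT)}^2
   \quad\forall v\in X_h .
\]

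The key step is to split the full $H^{1/2}(\CT)$-norm into the broken semi-norm plus an $L^2(\G)$-part,
\[
   \|v\|_{H^{1/2}(\CT)}^2 = |v|_{H^{1/2}(\CT)}^2 + \|v\|_{L^2(\G)}^2,
\]
and to bound each piece by $\|v\|_{H^{1/2}_\nu(\CT)}^2 = |v|_{H^{1/2}(\CT)}^2 + \nu\|\jump{v}\|_{L^2(\gamma)}^2$. The semi-norm term is already present verbatim on the right-hand side, so nothing is needed there. For the $L^2(\G)$-term I would invoke Lemma~\ref{la_tech4} (the discrete Poincar\'e--Friedrichs inequality \eqref{tech4}) with, say, $s=1/2+\epsilon'$, which gives $\|v\|_{L^2(\G)}\lesssim\|v\|_{H^{s}_\nu(\CT)}$; then pass from $H^s(\CT)$ back to $H^{1/2}(\CT)$ on the discrete space $X_h$ by the inverse estimate $|v|_{H^{s}(\CT)}\lesssim\underline h^{1/2-s}|v|_{H^{1/2}(\CT)}$, applied face-wise on the quasi-uniform meshes, to arrive at $\|v\|_{L^2(\G)}^2\lesssim\underline h^{1-2s}\|v\|_{H^{1/2}_\nu(\CT)}^2 = \underline h^{-2\epsilon'}\|v\|_{H^{1/2}_\nu(\CT)}^2$. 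Multiplying through by $\underline h^{2\epsilon'}\le 1$ and feeding this into Lemma~\ref{la_ell} (noting $\underline h^{2\epsilon'}|v|_{H^{1/2}(\CT)}^2\le\|v\|_{H^{1/2}_\nu(\CT)}^2$ trivially as well) yields exactly the claimed lower bound, with the same $c_G$ and the same remainder term.

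The only mildly delicate point — really the sole obstacle — is keeping the constants honest in the inverse estimate: the constant in $|v|_{H^{s}(\CT)}\lesssim\underline h^{1/2-s}|v|_{H^{1/2}(\CT)}$ for piecewise linears on shape-regular, quasi-uniform meshes may depend on $s$ (equivalently on $\epsilon'$), but since $\epsilon'$ is fixed and given in advance this dependence is harmless and is absorbed into the hidden constant of "$\gtrsim$". One should also check the exponent bookkeeping so that the requirement on $\nu$ coming out of Lemma~\ref{la_ell}, namely $\nu\gtrsim\underline h^{-\epsilon}$, is not worsened by this argument — and indeed it is not, since the reduction above uses only $\nu\ge 1$ (already implied). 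With the quasi-uniformity assumption $h\simeq\underline h$ the statement can be phrased in either mesh parameter. Hence the corollary follows.
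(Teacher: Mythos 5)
Your proposal is correct and follows essentially the same route as the paper: the paper also reduces the corollary to the bound $\underline{h}^{2\epsilon'}\|v\|_{H^{1/2}(\CT)}^2\lesssim\|v\|_{H^{1/2}_\nu(\CT)}^2$ on $X_h$, obtained by applying Lemma~\ref{la_tech4} with $s=1/2+\epsilon'$ and the inverse estimate to control $\|v\|_{L^2(\G)}$, and then invokes Lemma~\ref{la_ell}. The bookkeeping of the $s$-dependent constants and of the condition on $\nu$ is handled as you describe.
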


\begin{proof}
We select $s=1/2+\epsilon'$ in \eqref{tech4} and use the inverse property to conclude that
\[
   \|v\|_{L^2(\G)}^2
   \lesssim
   \underline{h}^{-2\epsilon'} |v|_{H^{1/2}(\G)}^2 + \nu \|\jump{v}\|_{L^2(\gamma)}^2
   \quad\forall v\in X_h.
\]
This means that
$\underline{h}^{2\epsilon'} \|v\|_{H^{1/2}(\CT)}^2\lesssim \|v\|_{H^{1/2}_\nu(\CT)}^2$
for any $v\in X_h$, and the assertion follows from Lemma~\ref{la_ell}.
\end{proof}

\begin{lemma} \label{la_AN}
Let Assumptions~1,2, and~3 hold true and assume that the meshes
defining $X_h$ are globally quasi-uniform, i.e., $h\simeq\underline{h}$. Given $\epsilon>0$ choose
$\nu\simeq h^{-\epsilon}$. Then there exists $h_0>0$ such that the discrete scheme
\eqref{bem_nitsche} is uniquely solvable for $h\le h_0$. Furthermore, for $h\le h_0$ there holds
\begin{align*}
   \|u-u_h\|_{H^{1-\riso}(\CT)}
   \lesssim
   h^{\riso-s} \max\{\nu (s-1/2)^{-1}, (s-1/2)^{-2}\} \|u-u_h\|_{H^{s}(\CT)}
\end{align*}
for any $s\in (1/2,\min\{\riso,\rreg\}]$.
\end{lemma}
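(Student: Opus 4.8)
The plan is to obtain both conclusions — unique solvability of \eqref{bem_nitsche} and the duality bound — from a single Aubin–Nitsche argument applied to any $w\in X_h$ satisfying the Galerkin orthogonality $A_k(w,v)=0$ for all $v\in X_h$. For the estimate this is $w=u-u_h$ (Lemma~\ref{la_consistent} together with \eqref{bem_nitsche}); for the solvability, since \eqref{bem_nitsche} is a square system on the finite-dimensional space $X_h$, it is enough to show that $w=0$ is the only solution of the homogeneous discrete problem.

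First I would set up the duality. Since $1-\riso<1/2$, the broken norm $\|\cdot\|_{H^{1-\riso}(\CT)}$ is equivalent to $\|\cdot\|_{H^{1-\riso}(\G)}$ (no interface continuity is felt below order $1/2$), and the dual of $H^{1-\riso}(\G)$ with respect to $\<\cdot,\cdot\>_\G$ is $H^{\riso-1}(\G)$; hence $\|w\|_{H^{1-\riso}(\CT)}\simeq\sup\{|\<w,\psi\>_\G|:\ \psi\in H^{\riso-1}(\G),\ \|\psi\|_{H^{\riso-1}(\G)}=1\}$. Fixing such a $\psi$, Assumption~2 provides a unique $z\in\tH^\riso(\G)$ with $W_{-k}z=\psi$ and $\|z\|_{\tH^\riso(\G)}\lesssim\|\psi\|_{H^{\riso-1}(\G)}$. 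As $\riso>1/2$, $z$ is continuous and vanishes on $\partial\G$, so — repeating the computation in the proof of Lemma~\ref{la_consistent} now for wave number $-k$ (admissible by Remark~\ref{rem2}(ii)) and with test functions from $H^s(\CT)$ instead of $X_h$ — one gets the adjoint consistency $A_{-k}(z,v)=\<\psi,v\>_\G$ for all $v\in H^s(\CT)$, $s>1/2$. Combining this with the elementary identity $A_k(v,w)=\overline{A_{-k}(w,v)}$ (which follows from the adjointness $\bV{k}^{*}=\bV{-k}$ with respect to $\<\cdot,\cdot\>_\G$ and from the definition \eqref{def_T} of $T_r$) gives $\<w,\psi\>_\G=A_k(w,z)$, and subtracting the vanishing term $A_k(w,z_h)$ for arbitrary $z_h\in X_h$ yields $\<w,\psi\>_\G=A_k(w,z-z_h)$. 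I would then estimate the right-hand side by the boundedness of $A_k$ (Lemma~\ref{la_cont}) and by a subdomain-wise approximation estimate $\inf_{z_h\in X_h}\|z-z_h\|_{H^s(\CT)}\lesssim h^{\riso-s}\|z\|_{H^\riso(\G)}$ for $1/2<s\le\riso$ (piecewise-linear quasi-interpolation on each $\CT_j$; note $z_h$ need not be globally continuous, which is consistent with $X_h\not\subset\tH^{1/2}(\G)$). This produces $|\<w,\psi\>_\G|\lesssim\max\{\nu(s-1/2)^{-1},(s-1/2)^{-2}\}\,h^{\riso-s}\|w\|_{H^s(\CT)}$, and taking the supremum over $\psi$ yields the asserted estimate with $w=u-u_h$; here $s\le\rreg$ is additionally needed so that $u-u_h\in H^s(\CT)$, which gives the range $s\in(1/2,\min\{\riso,\rreg\}]$.

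For unique solvability I would apply the bound just derived to a homogeneous solution $w$ and close the argument with the discrete Gårding inequality. Using an inverse inequality together with Lemma~\ref{la_tech4} one bounds $\|w\|_{H^s(\CT)}\lesssim\underline h^{1/2-s}\|w\|_{H^{1/2}_\nu(\CT)}$, and an interpolation/embedding argument gives $\|w\|_{H^{1/2-\rcomp}(\CT)}\lesssim\|w\|_{H^{1-\riso}(\CT)}^{\theta}\|w\|_{H^{1/2}(\CT)}^{1-\theta}$ for a suitable $\theta\in(0,1]$. Feeding these into Lemma~\ref{la_ell}, which at $v=w$ with $A_k(w,w)=0$ reads $\|w\|_{H^{1/2}_\nu(\CT)}^2\lesssim(t-1/2)^{-2}\underline h^{1-2t}\|w\|_{H^{1/2-\rcomp}(\CT)}^2$, and using $\nu\simeq h^{-\epsilon}$ and $\underline h\simeq h$, one arrives at a self-bound $\|w\|_{H^{1/2}_\nu(\CT)}^2\lesssim h^{\delta}\|w\|_{H^{1/2}_\nu(\CT)}^2$ with $\delta>0$ once $s$ and $t$ are fixed slightly above $1/2$; hence $w=0$ for $h\le h_0$, and the scheme is uniquely solvable.

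I expect the last bootstrap to be the main obstacle: one must track the powers of $\underline h\simeq h$ and the blow-up factors $(s-1/2)^{-1}$, $(t-1/2)^{-1}$ coming from the inverse estimates, from Lemma~\ref{la_cont} and from Lemma~\ref{la_ell}, and balance them against the scaling $\nu\simeq h^{-\epsilon}$ so that a strictly negative power of $h$ survives after $s,t$ are fixed just above $1/2$; extra care is needed when $1/2-\rcomp$ and $1-\riso$ are not ordered conveniently, which is why the interpolation inequality above is used. The remaining technical points are routine but must be checked: the extension of Lemma~\ref{la_consistent} to wave number $-k$ and to test functions in $H^s(\CT)$ — which rests on the integration-by-parts formula \eqref{def_t} being applicable to $\bvarphi=\bV{-k}\bcurlT z$, legitimate because $\bcurlT z\in\tH^{\riso-1}(\CT)$ and $\bV{-k}$ maps it into $H^\riso(\G)$ by \eqref{cont_V}, which has a well-defined trace on $\gamma$ — the identification of the dual of the broken space $H^{1-\riso}(\CT)$ with $H^{\riso-1}(\G)$ (using the norm equivalences valid below order $1/2$), and the verification of $A_k(v,w)=\overline{A_{-k}(w,v)}$ on $H^s(\CT)\times H^s(\CT)$.
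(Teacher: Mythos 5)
Your proposal is correct and follows essentially the same route as the paper: an Aubin--Nitsche duality argument via $W_{-k}$ (Assumption~2), adjoint consistency, Galerkin orthogonality, Lemma~\ref{la_cont} and the approximation estimate for the error bound, followed by bootstrapping that bound through inverse inequalities into the discrete G\r{a}rding inequality (Corollary~\ref{cor_ell}) to get uniqueness of the homogeneous discrete solution. The only cosmetic difference is that you control $\|w\|_{H^{1/2-\rcomp}(\CT)}$ by interpolating between $H^{1-\riso}(\CT)$ and $H^{1/2}(\CT)$, whereas the paper splits into the cases $\riso\le 1/2+\rcomp$ (direct embedding) and $\riso>1/2+\rcomp$ (inverse estimate); both variants leave a strictly positive surviving power of $h$ and close identically.
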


\begin{proof}
We first show the error estimate, i.e., for the time being let us assume that there is a
(unique) solution $u_h$ to \eqref{bem_nitsche}.
Note that there holds $A_r(v,w)=\overline{A_{-r}(w,v)}$ for any $r\in\R$ and
sufficiently smooth functions $v$, $w$. This follows from the fact the $\bV{-r}$ is the adjoint
operator of $\bV{r}$.
Let $\phi\in\tH^\riso(\G)$ be given (cf. Assumption~2).
By standard approximation results there exists $\phi_h\in X_h$ such that
\begin{equation} \label{pf_AN_1}
   \|\phi-\phi_h\|_{H^{s}(\CT)} \lesssim h^{\riso-s} \|\phi\|_{\tH^\riso(\G)},
   \quad 1/2\le s\le \riso.
\end{equation}
Using integration by parts (analogously to proving consistency in Lemma~\ref{la_consistent})
we find that there holds
\begin{align*}
   \<u-u_h,W_{-k}\phi\>_\G
   &=
   \overline{\<W_{-k}\phi, u-u_h\>}
   =
   \overline{A_{-k}(\phi, u-u_h)}
   =
   A_k(u-u_h,\phi) = A_k(u-u_h,\phi-\phi_h).
\end{align*}
Lemma~\ref{la_cont} and \eqref{pf_AN_1} then prove that
\begin{align*}
   |\<u-u_h,W_{-k}\phi\>_\G|
   &\lesssim
   \max\{\nu (s-1/2)^{-1}, (s-1/2)^{-2}\}
   \|u-u_h\|_{H^{s}(\CT)} h^{\riso-s} \|\phi\|_{\tH^\riso(\G)}.
\end{align*}
Noting that $\|\cdot\|_{\tH^{1-\riso}(\G)}\simeq \|\cdot\|_{H^{1-\riso}(\CT)}$
by \eqref{tech1_1} since $\riso\in (1/2,1)$, this bound implies the error estimate
via duality and by making use of Assumption~2:
\begin{align*}
   \|u-u_h\|_{\tH^{1-\riso}(\G)}
   \simeq
   \sup_{0\not=\psi\in H^{\riso-1}(\G)}
   \frac {|\<u-u_h,\psi\>_\G|}
         {\|\psi\|_{H^{\riso-1}(\G)}}
   &\simeq
   \sup_{0\not=\phi\in\tH^\riso(\G)}
   \frac {|\<u-u_h,W_{-k}\phi\>_\G|}
         {\|\phi\|_{\tH^\riso(\G)}}.
\end{align*}
We are left with showing unique existence of $u_h$ for small $h$. Since we are dealing
with a quadratic discrete system, it is enough to show uniqueness. Therefore, in the
remainder of this proof, we assume that we are solving the homogeneous problem \eqref{weak_org},
i.e., $f=0$ and $u=0$. We have to show that only $u^0_h=0$ solves the homogeneous discrete scheme
\eqref{bem_nitsche}. The first part of this proof and the inverse property show that there holds
\begin{align*}
   \|u^0_h\|_{H^{1-\riso}(\CT)}
   &\lesssim
   h^{\riso-s} \max\{\nu (s-1/2)^{-1}, (s-1/2)^{-2}\} \|u^0_h\|_{H^{s}(\CT)}
   \\
   &\lesssim
   h^{\riso+1/2-2s} \max\{\nu (s-1/2)^{-1}, (s-1/2)^{-2}\} \|u^0_h\|_{H^{1/2}(\CT)}
\end{align*}
for $s\in (1/2,1]$.
Now, if $\riso\le 1/2+\rcomp$, then
\(
   \|u^0_h\|_{H^{1/2-\rcomp}(\CT)} \le \|u^0_h\|_{H^{1-\riso}(\CT)},
\)
and combination of the bound above with the estimate by Corollary~\ref{cor_ell} yields
\begin{equation} \label{pf_AN_2}
   0=|A_k(u^0_h,u^0_h)|
   \gtrsim
   \Bigl(h^{2\epsilon'} - C(s) h^{2\riso+2-6s-2\epsilon}\Bigr)
   \|u^0_h\|_{H^{1/2}(\CT)}^2
\end{equation}
for a number $C(s)$ depending on $s$.
For $\epsilon, \epsilon'>0$ small enough,
we can select $s\in \bigl(1/2, (\riso+1-\epsilon-\epsilon')/3\bigr)$ and find $h_0>0$
such that
\[
   h^{2\epsilon'} - C(s) h^{2\riso+2-6s-2\epsilon}
   =
   h^{2\epsilon'} \Bigl(1 - C(s) h^{2\riso+2-6s-2\epsilon-2\epsilon'}\Bigr)
   > 0\quad\forall h\le h_0.
\]
If $\riso>1/2+\rcomp$ then we additionally use the inverse property to bound
\(
   \|u^0_h\|_{H^{1/2-\rcomp}(\CT)} \le h^{1/2+\rcomp-\riso} \|u^0_h\|_{H^{1-\riso}(\CT)}.
\)
Then we obtain, instead of \eqref{pf_AN_2},
\begin{equation} \label{pf_AN_3}
   0=|A_k(u^0_h,u^0_h)|
   \gtrsim
   \Bigl(h^{2\epsilon'} - C(s) h^{2\rcomp+3-6s-2\epsilon}\Bigr)
   \|u^0_h\|_{H^{1/2}(\CT)}^2.
\end{equation}
Analogously as before, for $\epsilon, \epsilon'>0$ small enough,
we can select $s\in \bigl(1/2, 1/2+(\rcomp-\epsilon-\epsilon')/3\bigr)$ and
find $h_0>0$ such that
\[
   h^{2\epsilon'} - C(s) h^{2\rcomp+3-6s-2\epsilon}
   =
   h^{2\epsilon'} \Bigl(1 - C(s) h^{2\rcomp+3-6s-2\epsilon-2\epsilon'}\Bigr)
   > 0\quad\forall h\le h_0.
\]
In both cases, \eqref{pf_AN_2} respectively \eqref{pf_AN_3} proves that
$u^0_h=0$ for $h$ sufficiently small.
\end{proof}

\subsection{Proof of Theorem~\ref{thm_Cea}}

The existence and uniqueness of $u_h$ solving \eqref{bem_nitsche} for small $h$ is guaranteed
by Lemma~\ref{la_AN}.
The proof of the error estimate follows the standard Strang strategy, that is, adding and subtracting
a discrete function, using the triangle inequality, discrete G\r{a}rding's inequality
(Corollary~\ref{cor_ell}), consistency (Lemma~\ref{la_consistent}) and boundedness
(Lemma~\ref{la_cont}).
More precisely, given $\epsilon'>\epsilon>0$, $h$ small enough, $\nu\simeq h^{-\epsilon}$,
and $v\in X_h$, we find by the just mentioned arguments
\begin{align} \label{pf_Cea_0}
\lefteqn{
   \|u-u_h\|_{H^{1/2}(\CT)}^2
   \lesssim
   \|u-v\|_{H^{1/2}(\CT)}^2 + \|u_h-v\|_{H^{1/2}(\CT)}^2
}
   \nonumber\\
   &\lesssim
   \|u-v\|_{H^{1/2}(\CT)}^2
   +
   (s-1/2)^{-2} h^{1-2s-2\epsilon'} \|u_h-v\|_{H^{1/2-\rcomp}(\CT)}^2
   +
   h^{-2\epsilon'} |A_k(u-v,u_h-v)|
   \nonumber\\
   &\lesssim
   \|u-v\|_{H^{1/2}(\CT)}^2
   +
   (s-1/2)^{-2} h^{1-2s-2\epsilon'} \|u_h-v\|_{H^{1/2-\rcomp}(\CT)}^2
   \nonumber\\
   &\ +
   h^{-2\epsilon'} \max\{\nu (s-1/2)^{-1}, (s-1/2)^{-2}\}
   \Bigl(\delta^{-1} \|u-v\|_{H^{s}(\CT)}^2
         +
         \delta h^{1-2s} \|u_h-v\|_{H^{1/2}(\CT)}^2
   \Bigr)
\end{align}
for any $\delta>0$. In the last step we applied Young's and the inverse inequality.
We have to consider the relation between $\riso$ and $\rcomp$,
cf. the proof of Lemma~\ref{la_AN}.
In the case $\riso\le 1/2+\rcomp$ we bound
\(
   \|u_h-v\|_{H^{1/2-\rcomp}(\CT)} \le \|u_h-v\|_{H^{1-\riso}(\CT)}.
\)
Otherwise,
\[
   \|u_h-v\|_{H^{1/2-\rcomp}(\CT)} \lesssim h^{1/2+\rcomp-\riso} \|u_h-v\|_{H^{1-\riso}(\CT)}
\]
by the inverse property. Both cases are considered by
\[
   \|u_h-v\|_{H^{1/2-\rcomp}(\CT)}
   \lesssim
   h^{-\alpha} \|u_h-v\|_{H^{1-\riso}(\CT)},
   \quad\alpha:=\max\{0,\riso-\rcomp-1/2\}.
\]
We use the just established estimate in \eqref{pf_Cea_0},
and bound the norms of $u_h-v$ by adding and subtracting $u$,
applying the triangle inequality,
and then bound $\|u-u_h\|_{H^{1-\riso}(\CT)}$ with the help of Lemma~\ref{la_AN}.
This yields (with hidden constants depending on $s$)
\begin{align} \label{pf_Cea_1}
\lefteqn{
   \|u-u_h\|_{H^{1/2}(\CT)}^2
   \lesssim
   \|u-v\|_{H^{1/2}(\CT)}^2
   +
   h^{1-2s-2\epsilon'-2\alpha} \|u-v\|_{H^{1-\riso}(\CT)}^2
}\nonumber\\
   &+
   h^{-2\epsilon'} \nu
   \Bigl(\delta^{-1} \|u-v\|_{H^{s}(\CT)}^2
         +
         \delta h^{1-2s} \|u-v\|_{H^{1/2}(\CT)}^2
   \Bigr)\nonumber\\
   &+
   \delta h^{1-2s-2\epsilon'} \nu \|u-u_h\|_{H^{1/2}(\CT)}^2
   +
   h^{1+2\riso-4s-2\epsilon'-2\alpha} \nu^2 \|u-u_h\|_{H^{s}(\CT)}^2.
\end{align}
The last term is handled yet again by the same technique (adding and subtracting $v$,
inverse property of $u_h-v$):
\begin{align} \label{pf_Cea_2}
   \|u-u_h\|_{H^{s}(\CT)}^2
   &\lesssim
   \|u-v\|_{H^{s}(\CT)}^2
   +
   h^{1-2s} \Bigl(\|u-v\|_{H^{1/2}(\CT)}^2 + \|u-u_h\|_{H^{1/2}(\CT)}^2\Bigr).
\end{align}
Combination of \eqref{pf_Cea_1} and \eqref{pf_Cea_2}, and reordering terms yields
\begin{align*}
   \lefteqn{
   \Bigl(1 - c_1\delta h^{1-2s-2\epsilon'} \nu
           - c_2h^{2+2\riso-6s-2\epsilon'-2\alpha} \nu^2\Bigr)
   \|u-u_h\|_{H^{1/2}(\CT)}^2
   }\\
   &\lesssim
   h^{1-2s-2\epsilon'-2\alpha} \|u-v\|_{H^{1-\riso}(\CT)}^2\\
   &\ +
   \Bigl(
      1 + h^{1-2s-2\epsilon'} \nu \delta
        + h^{2+2\riso-6s-2\epsilon'-2\alpha} \nu^2
   \Bigr)
   \|u-v\|_{H^{1/2}(\CT)}^2\\
   &\ +
   \Bigl(  h^{-2\epsilon'} \nu \delta^{-1}
         + h^{1+2\riso-4s-2\epsilon'-2\alpha} \nu^2
   \Bigr)
   \|u-v\|_{H^{s}(\CT)}^2
\end{align*}
for two constants $c_1, c_2>0$. We now select
$\epsilon'>\epsilon>0$ sufficiently small such that there is
$s\in \bigl(1/2, (1+\riso-\alpha-\epsilon-\epsilon')/3\bigr)$. This is possible since
$1+\riso-\alpha>3/2$. Furthermore, for the selected $s$ we choose
$\delta=h^{2s-1+\epsilon+2\epsilon'+\epsilon''}$ for $\epsilon''>0$.
Then the factor on the left-hand side is bounded from below by a positive
constant for $h$ being small enough.
Replacing $\delta$ and $\nu$ also on the right-hand side shows that
\begin{align*}
   \|u-u_h\|_{H^{1/2}(\CT)}^2
   &\lesssim
   h^{1-2s-2\epsilon'-2\alpha} \|u-v\|_{H^{1-\riso}(\CT)}^2
   +
   \|u-v\|_{H^{1/2}(\CT)}^2\\
   &\ +
   \Bigl(  h^{1-2s-(2\epsilon+4\epsilon'+\epsilon'')}
         + h^{1+2\riso-4s-2\alpha-2\epsilon'-2\epsilon)}
   \Bigr)
   \|u-v\|_{H^{s}(\CT)}^2
\end{align*}
for $h$ small enough.
By the assumptions $\riso\in (1/2,1)$ and $\rcomp\in(0,1/2]$ one finds that,
for $\epsilon, \epsilon'$ sufficiently small, the term
\(
   h^{1-2s-(2\epsilon+4\epsilon'+\epsilon'')}
   \|u-v\|_{H^{s}(\CT)}^2
\)
is the dominating one of the upper bound in the sense of best approximation orders in $h$.
Then, renaming $\epsilon+2\epsilon'+\epsilon''/2$ to be the new $\epsilon'$,
this yields the error estimate of Theorem~\ref{thm_Cea}, for the previously noted
selection of $s\in \bigl(1/2,(1+\riso-\alpha-\epsilon-\epsilon')/3\bigr)$.
It is also clear that the upper bound for $s$ can be dropped, as long as $s\le\rreg$.

\section{Numerical results} \label{sec_num}

We consider the model problem (\ref{IE}) with $\G=(-1/2,1/2) \times (-1/2,1/2) \times \{0\}$,
right-hand side function $f=1$, and wave number $k=5$.
We use a decomposition of $\G$ into three sub-domains,
as indicated in Fig.~\ref{fig_meshes}, and consider
rectangular meshes which are piecewise uniform with respect to
sub-domains, and globally quasi-uniform.
The initial four meshes are also shown in Fig.~\ref{fig_meshes}.
The discrete spaces $X_h$ consist of piecewise bilinear polynomials which are continuous
on sub-domains.

\begin{figure}[htb]
\centering
\includegraphics[width=0.6\textwidth]{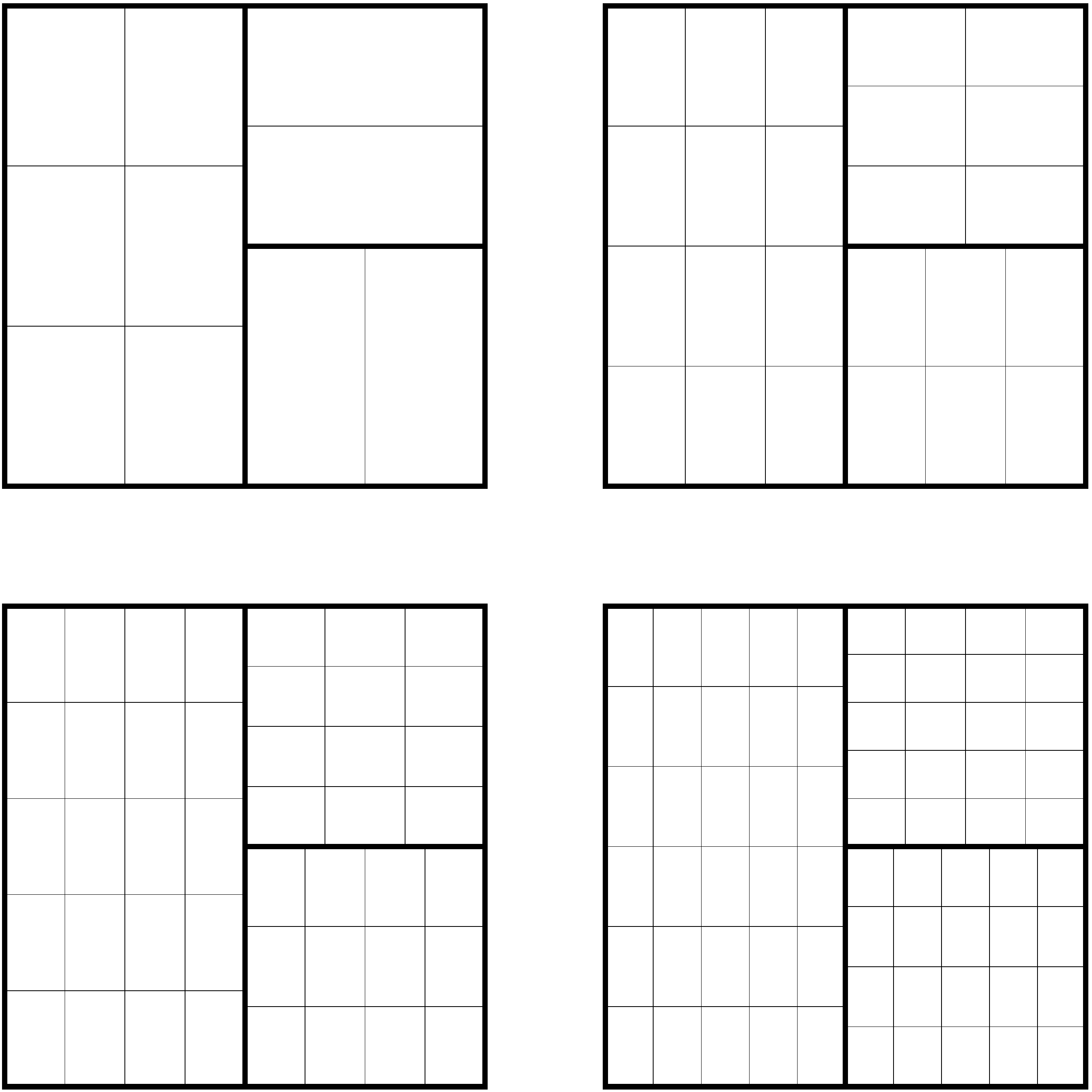} 
\caption{Decomposition of $\G$ into three sub-domains and initial mesh sequence.}
\label{fig_meshes}
\end{figure}

According to Corollary~\ref{cor_error}, and taking into account Remark~\ref{rem2}~(i),
we expect that for sufficiently large $\nu$,
the error $\|u-u_h\|_{H^{1/2}(\CT)}$ has convergence order close to
$1/2$, the optimal one for a conforming method and piecewise (bi)linear functions on
quasi-uniform meshes, cf.~\cite{BespalovH_08_hpB}.
However, since the exact solution $u$ of \eqref{IE} is unknown, the error
cannot be computed directly. But even knowing $u$ it would be difficult to calculate
the necessary norm. For the Laplacian ($k=0$) the residual and the
$L^2$-norm of the jumps form a reasonable upper bound for the error, see the discussion
in \cite[Section~5]{HeuerM_13_DGB}. In the Helmholtz case ($k\not=0$) energy arguments
leading to such estimates do not apply without perturbation terms, see
\cite[Section~5]{HolmMS_96_hpB}. Nevertheless, we conclude from the previously
mentioned discussions that the sum of the two terms
\[
   \underbrace{\Bigl|\|u\|_{\mathrm{ex}}^2
                -
                \mathrm{Re}\bigl(\<\bV{k} \bcurlT u_h, \bcurlT u_h\>_\CT
                -
                k^2\<V_k u_h, u_h\>_\G\bigr)\Bigr|^{1/2}}_{=: \mathrm{residual}}
   +
   \underbrace{\|\jump{u_h}\|_{L^2(\gamma)}}_{=: \mathrm{jumps}}
\]
is a reasonably justified upper bound for the error $\|u-u_h\|_{H^{1/2}(\CT)}$.
Here, $\|u\|_{\mathrm{ex}}^2$ is an approximation of $\mathrm{Re}\<W_k u,u\>_\G$
generated by extrapolation on a sequence of uniform meshes, cf.~\cite{ErvinHS_93_hpB}.

Figure~\ref{fig_error} shows the errors on a double logarithmic scale
versus the inverse of the maximum over all side lengths.
For comparison also the error in energy norm for the conforming variant on a sequence
of uniform meshes and the curve $0.25 h^{1/2}$ are given. They confirm the convergence
order $O(h^{1/2})$ of the conforming BEM. The results of the Nitsche approximation
with $\nu=10,100,1000$ indicate that, for $\nu$ sufficiently large ($\nu=1000$ in this
case) this optimal order is achieved. At least for the model problem, this wave number
and for the meshes considered, we do not observe a reduced convergence order.
Such a reduced order can be seen in the case $\nu=10$. For $\nu=100$ the residual
appears to reflect some pre-asymptotic behavior whereas the jumps still indicate a reduced
convergence order.

For illustration, we also present some conforming and Nitsche approximations to
the solution $u$ of \eqref{IE}, again with wave number $k=5$.
Figure~\ref{fig_approx_conf} shows a conforming approximation (including homogeneous boundary
condition) whereas Figure~\ref{fig_approx_Nitsche} presents the
Nitsche results for different meshes (the real parts on the left and the imaginary parts
on the right). The coarser mesh (upper plots) is the last one from Figure~\ref{fig_meshes}.

\begin{figure}[htb]
\includegraphics[width=\textwidth]{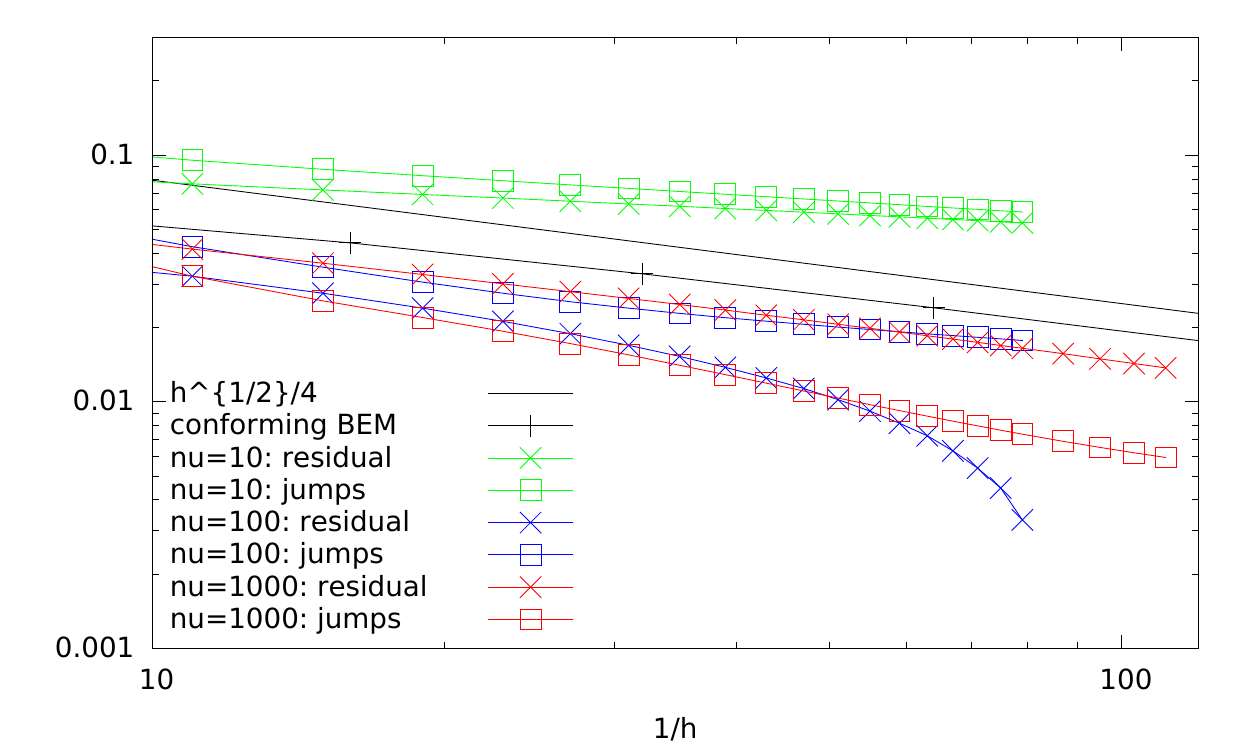}
\caption{Errors of conforming BEM and Nitsche-BEM with $\nu=10, 100, 1000$.}
\label{fig_error}
\end{figure}

\begin{figure}[htb]
\begin{center}
\includegraphics[width=0.45\textwidth]{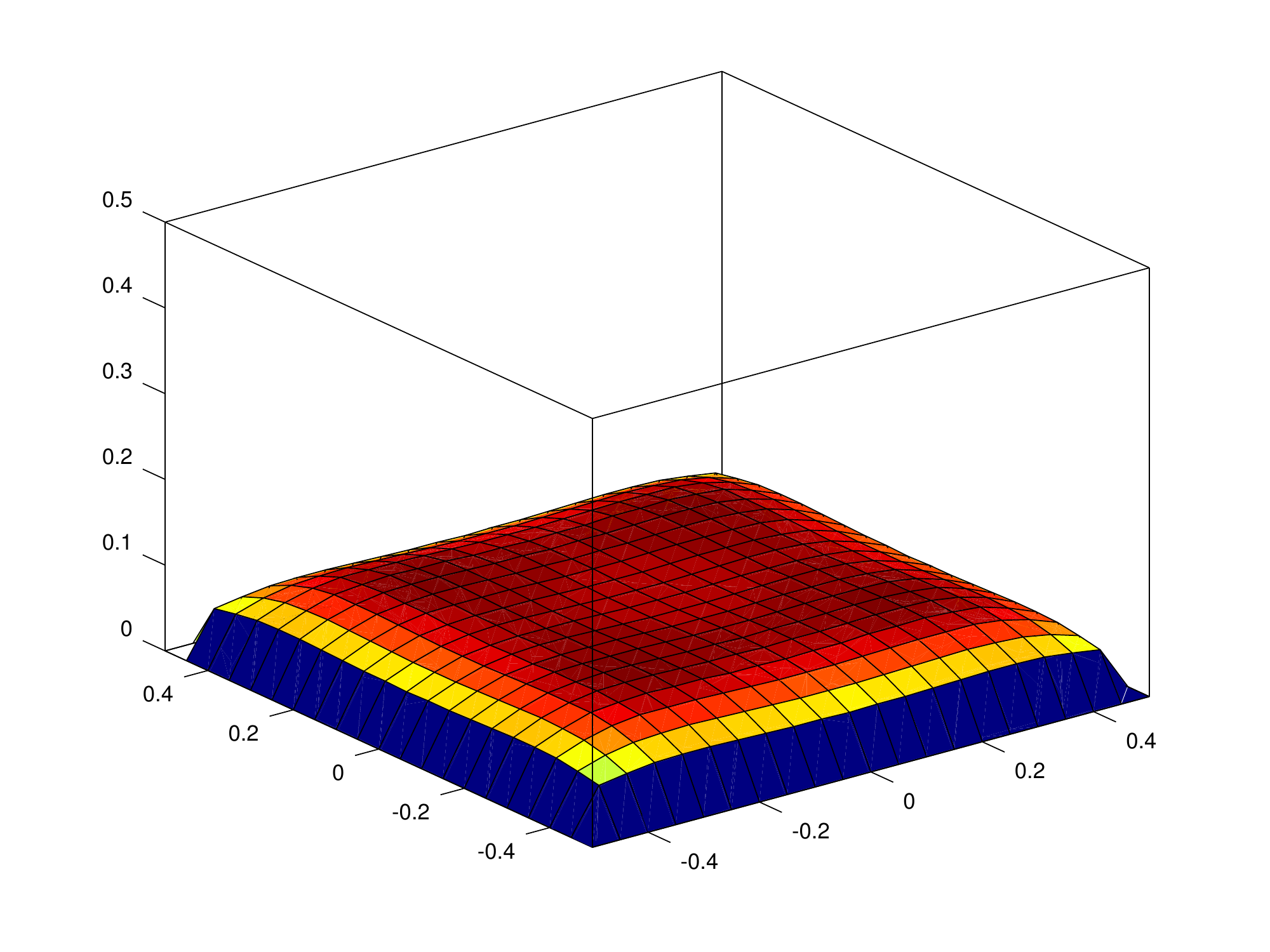}
\includegraphics[width=0.45\textwidth]{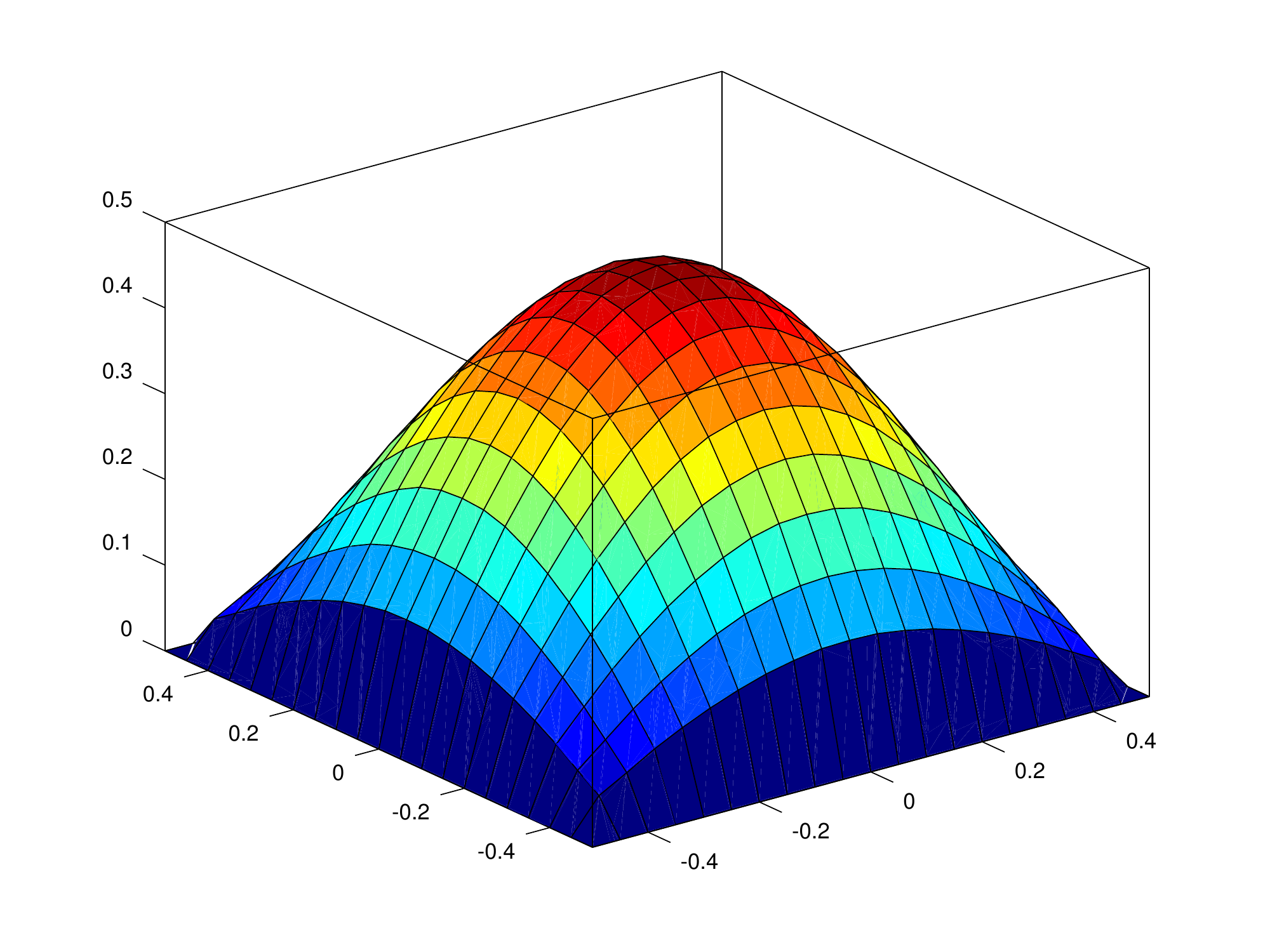}
\end{center}
\caption{Conforming approximation for $k=5$, real part (left) and imaginary part (right).}
\label{fig_approx_conf}
\end{figure}

\begin{figure}[htb]
\begin{center}
\includegraphics[width=0.45\textwidth]{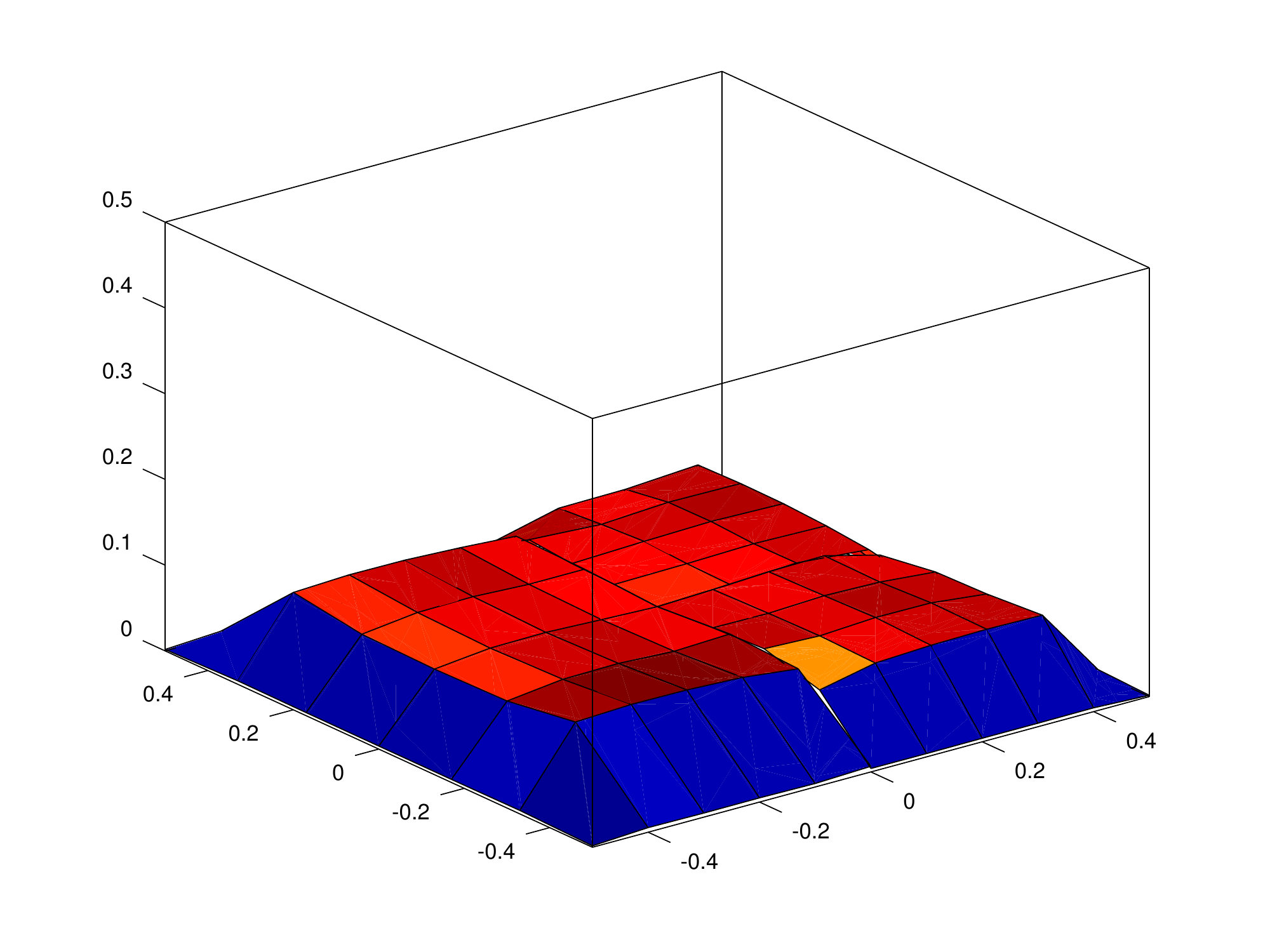}
\includegraphics[width=0.45\textwidth]{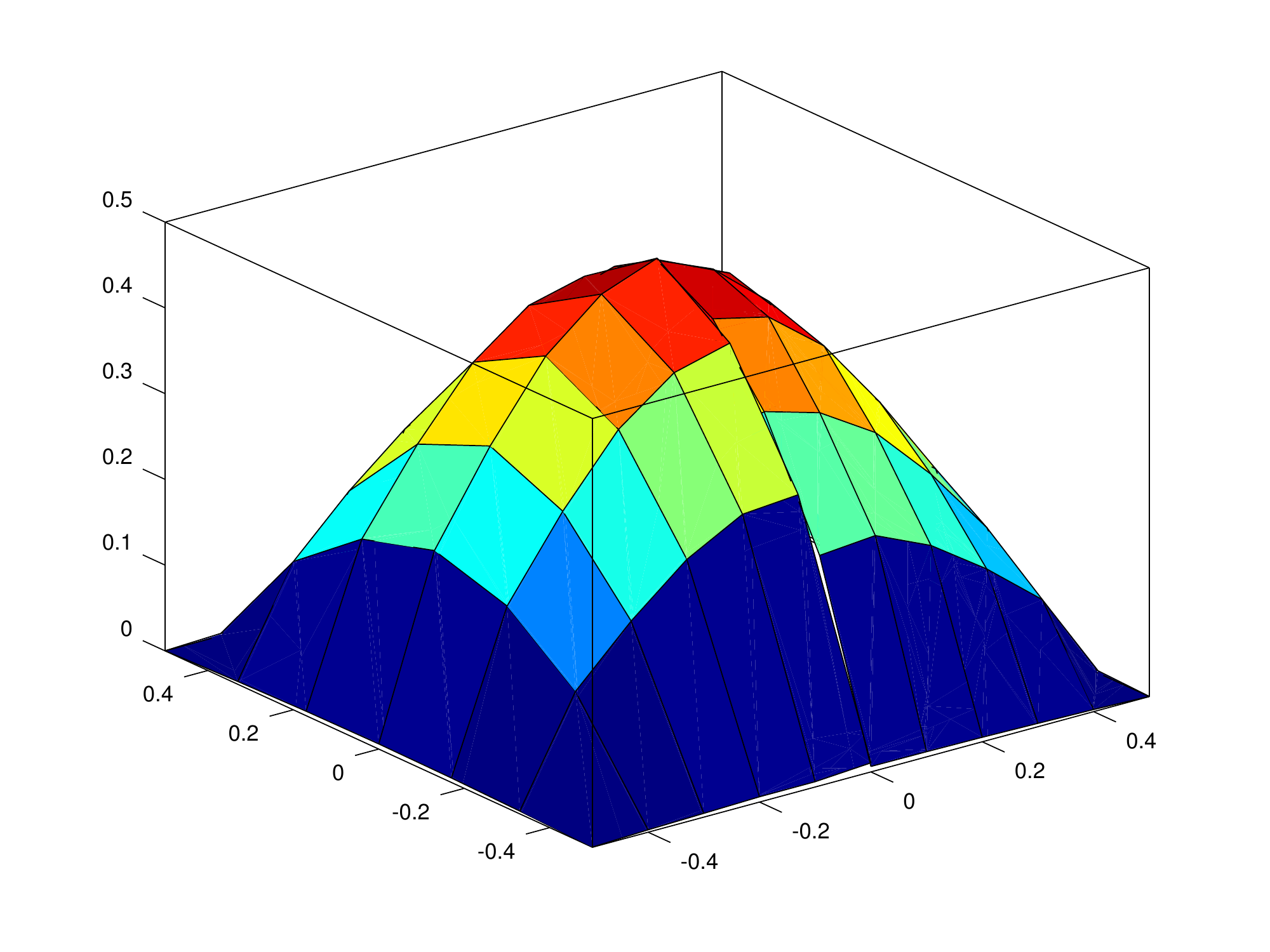}

\includegraphics[width=0.45\textwidth]{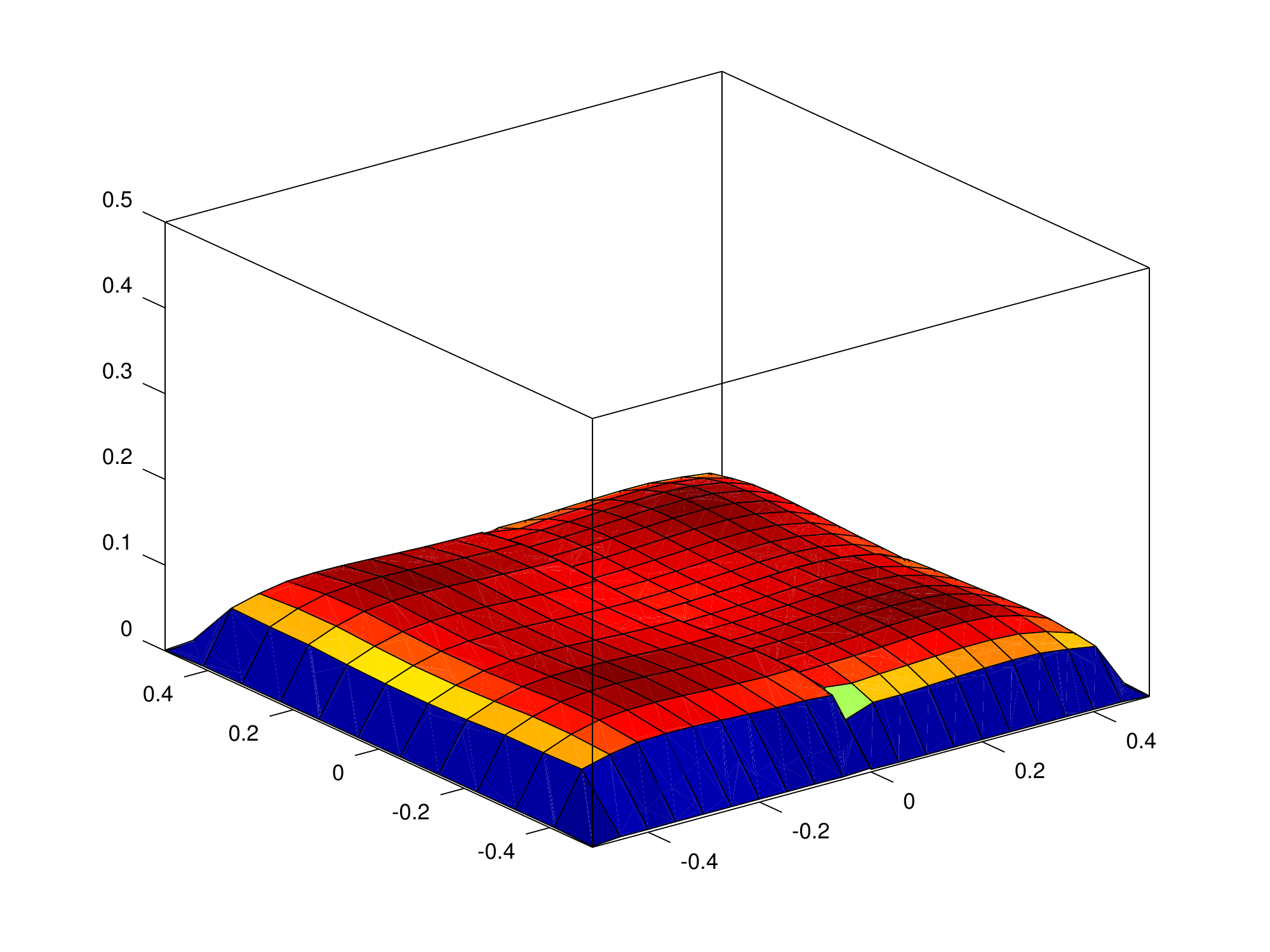}
\includegraphics[width=0.45\textwidth]{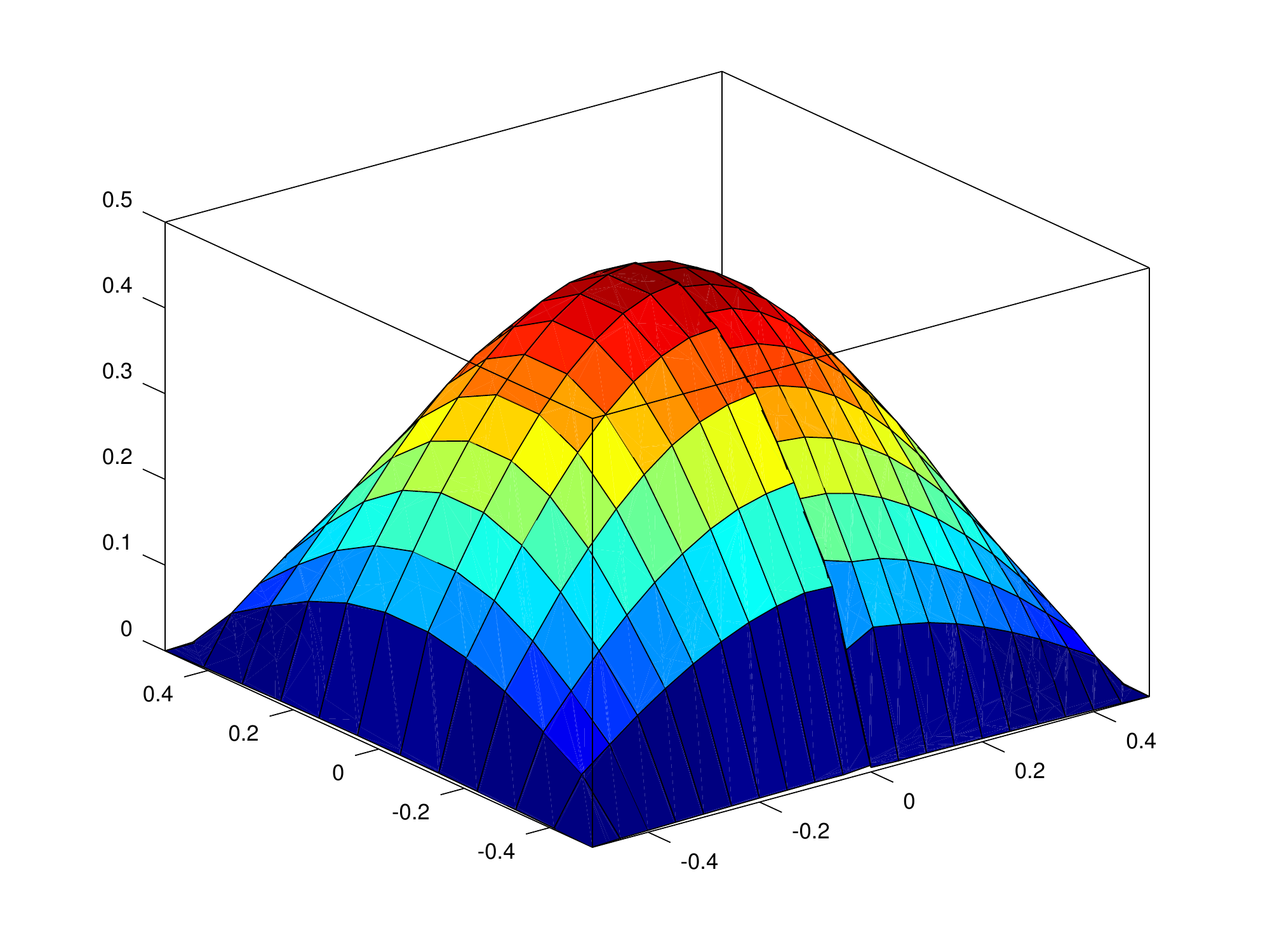}
\end{center}
\caption{Nitsche approximations ($k=5$, $\nu=10$),
         real parts (left) and imaginary parts (right).}
\label{fig_approx_Nitsche}
\end{figure}

\clearpage

\bibliographystyle{siam}
\bibliography{/home/norbert/tex/bib/bib,/home/norbert/tex/bib/heuer,/home/norbert/tex/bib/fem}

\end{document}